\documentclass[11pt]{article}
\usepackage{amsfonts}
\usepackage{color}
\newtheorem{theo}{Theorem}[section]
\newtheorem{lem}[theo]{Lemma}

\newtheorem{cor}[theo]{Corollary}

\newcommand{\mysection}[1]{\section{#1} \setcounter{equation}{0}}
\newcommand{\proof}{{\sc Proof.} \quad}
\newcommand{\proofc}{{\sc Proof} \ }
\newcommand{\be}{\begin{equation} \label}
\newcommand{\ee}{\end{equation}}
\newcommand{\bea}{\begin{eqnarray}\label}
\newcommand{\eea}{\end{eqnarray}}
\newcommand{\bas}{\begin{eqnarray*}}
\newcommand{\eas}{\end{eqnarray*}}
\newcommand{\bit}{\begin{itemize}}
\newcommand{\eit}{\end{itemize}}
\newcommand{\qed}{\hfill$\Box$ \vskip.2cm}
\newcommand{\nn}{\nonumber}
\newcommand{\R}{\mathbb{R}}

\newcommand{\eps}{\varepsilon}

\newcommand{\io}{\int_\Omega}
\newcommand{\bom}{\overline{\Omega}}
\newcommand{\mint}{- \hspace*{-3.5mm} \int}
\newcommand{\hs}{\hspace*}

\newcommand{\ov}{\overline}
\newcommand{\un}{\underline}
\newcommand{\abs}{\\[5pt]}
\newcommand{\tm}{T_{max}}

\newcommand{\uz}{\underline{z}}
\newcommand{\oz}{\overline{z}}
\newcommand{\xis}{\xi_\star}
\newcommand{\ms}{m_\star}
\newcommand{\omc}{\ov{m}_\star}
\newcommand{\umc}{\un{m}_\star}
\newcommand{\set}{{\cal S}}
\begin{document}
\enlargethispage{10mm}
\title{Identifying secondary critical mass levels\\
in two- and higher-dimensional Keller-Segel systems}
\author{
{ Yulan Wang \footnote{wangyulan-math@163.com}}\\
{\small School of Science, Xihua University,}\\
{\small 610039 Chengdu, China}
\and
Michael Winkler\footnote{michael.winkler@math.uni-paderborn.de}\\
{\small Universit\"at Paderborn, Institut f\"ur Mathematik}\\
{\small 33098 Paderborn, Germany}
}
\date{}
\maketitle
\begin{abstract}
\noindent
  The parabolic-elliptic Keller-Segel system
  \bas
	\left\{ \begin{array}{l}
	u_t = \Delta u - \nabla \cdot (u\nabla v), \\[1mm]
	0 = \Delta v - \mu + u,
	\qquad \mu:=\frac{1}{|\Omega|} \io u,
 	\end{array} \right.
	\qquad \qquad (\star)
  \eas
  is considered under homogeneous Neumann boundary conditions in the ball $\Omega=B_R(0)\subset \R^n$, where $n\ge 2$
  and $R>0$. \abs
  The main results identify a mass level $\ms(n,R)>0$ as critical in the sense that
  \bit
  \item
  whenever $0<m<\ms(n,R)$, the constant equilibrium of ($\star$) given by $u\equiv v\equiv \frac{m}{|B_R|}$
  is locally asymptotically stable with respect to the norm in $L^\infty(B_R)$,
  \eit
  but that
  \bit
  \item
  if $m>\ms(n,R)$, then any solution of ($\star$) emanating from
  nonnegative and nonconstant continuous radial initial data $u_0$ with $\int_{B_R} u_0=m$ and
  $\frac{1}{|B_r|} \int_{B_r} u_0 \ge \frac{m}{|B_R|}$ for all $r\in (0,R)$
  blows up in finite time.
  \eit
  The value of $\ms(n,R)$ can be determined in an essentially explicit manner in dependence on principal zeroes of
  appropriate Bessel functions of the first kind; in particular, $\ms(2,R)\approx 14.68197 \cdot \pi$ for all $R>0$.
  Accordingly, as a supplement to known mass-dependent blow-up dichotomies these findings reveal a genuinely secondary
  critical mass phenomenon for ($\star$) especially also in its two-dimensional version in which no precedent of this
  form seems known.\abs
\noindent {\bf Key words:} chemotaxis; critical mass; finite-time blow-up; stability\\
{\bf MSC 2020:} 35B40 (primary); 35B44, 35K65, 35B33, 92C17 (secondary)
\end{abstract}
\newpage
\section{Introduction}\label{intro}
Critical mass phenomena belong to the most striking subtleties of Keller-Segel type cross-diffusion systems
(\cite{perthame_book}).
In contexts of initial-boundary value problems of the form
\be{0}
    	\left\{ \begin{array}{rcll}
	u_t &=& \Delta u - \nabla \cdot (u\nabla v),
	\qquad & x\in B_R, \ t>0, \\[1mm]
	0 &=& \Delta v - \mu + u,
	\qquad \mu= \mint_{B_R} u_0,
	\qquad & x\in B_R, \ t>0, \\[1mm]
	& & \hspace*{-13mm}
	\frac{\partial u}{\partial\nu}=\frac{\partial v}{\partial\nu}=0,
	\qquad & x\in\partial B_R, \ t>0, \\[1mm]
	& & \hspace*{-13mm}
	u(x,0)=u_0(x),
	\qquad & x\in\ B_R,
 	\end{array} \right.
\ee
the most classical findings in this direction assert the case when the spatial domain
$B_R:=B_R(0)$ herein is planar, for any suitably regular nonnegative and radially symmetric $u_0$ fulfilling $\int_{B_R} u_0<
m_c(2,R):=8\pi$
a globally defined bounded classical solution exists, whereas for any $m>8\pi$ one can find radial initial data
such that $\int_{B_R} u_0=m$ but that a corresponding temporally local classical solution blows up in finite time
(\cite{nagai1995}, \cite{biler1998}, \cite{senba_suzuki}).
Extensions to various two-dimensional relatives have confirmed a considerable stability of this phenomenon with respect to
modifications in the second sub-problem of (\ref{0}), including passage to slightly more complex elliptic or even
parabolic problems (\cite{nagai2001}, \cite{suzuki_JMPA}, \cite{blanchet_dolbeault_perthame_EJDE2006},
\cite{kozono_sugiyama_JEE2008}, \cite{biler_cieslak_karch_zienkiewicz_DCDS2017}, \cite{herrero_velazquez},
\cite{horstmann_wang}).
{Some further detections on critical mass phenomena in some closely related chemotaxis systems can be found recorded in
\cite{mao_liu_yuxiang}, \cite{mao_yuxiang}, \cite{yuxiang}, \cite{xiang_yang}, \cite{yang_lan}, \cite{ding_win}, \cite{flw} and
\cite{taowin_JEMS}, for instance.
}
\abs
In higher-dimensional versions of (\ref{0}), at arbitrarily small mass levels some solutions blow up,
meaning that maintaining the focus on the mere occurrence of explosions
would lead to the conclusion that a correspndingly defined threshold should trivially satisfy
$m_c(n,R)=0$ for each $n\ge 3$ and $R>0$ (\cite{nagai1995}, \cite[Lemma 3.3]{win_MATANN}).\abs
More recently,
a refined analysis of the destabilizing action of cross-diffusion in (\ref{0})
has identified a pivotal influence of the total mass $\int_{B_R} u_0$ on the dynamics of (\ref{0})
not only in the planar but also in higher-dimensional settings.
The study in \cite{win_MATANN}, namely, examines how far singularity formation may be enforced by
initially present perturbations of homogeneous steady states, with a focus set on concentration-increasing perturbations.
More precisely, it has been found in \cite[Theorems 1.1 and 1.3]{win_MATANN} that whenever $n\ge 2$ and $R>0$, letting
\bea{set}
	\set(m,R) &:=& \bigg\{ u_0\in C^0(\ov{B}_R) \ \bigg| \
	\mbox{$u_0$ is nonnegative and radial with } \int_{B_R} u_0=m \mbox{\quad and} \nn\\
	& & \hspace*{28mm}
	\frac{1}{|B_r|} \int_{B_r} u_0 \ge \frac{m}{|B_R|} \mbox{ for all $r\in (0,R)$,}
	\quad but \quad
	u_0\not\equiv \frac{m}{|B_R|}
	\bigg\},
\eea
denote the set of all radial initial data at mass level $m$ which represent mass concentration stronger than that of the
constant equilibrium $u\equiv \frac{m}{|B_R|}$, we have that
both numbers
\be{omc}
	\omc(n.R):=\inf \Big\{ m>0 \ \Big| \
	\mbox{For all $u_0\in \set(m,R)$, (\ref{0}) admits a solution blowing up in finite time}
	\Big\}
\ee
and
\be{umc}
	\umc(n.R):=\sup \Big\{ m>0 \ \Big| \
	\mbox{(\ref{0}) possesses a global bounded solution for some $u_0\in \set(m,R)$} \Big\}
\ee
are well-defined and positive; here, as usual, a global bounded solution of (\ref{0}) is to be understood as
a classical solution $(u,v)$
on $B_R\times (0,\infty)$ for which $u$ belongs to $L^\infty(B_R\times (0,\infty))$, while by a solution blowing up in finite time
we mean a solution $(u,v)$ which is defined in $B_R\times (0,\tm)$ for some $\tm\in (0,\infty)$ and satisfies
$u\not\in L^\infty(B_R\times (0,\tm))$.
In particular, these properties of $\umc$ and $\omc$ imply that at each accordingly supercritical mass level $m>\omc(n,R)$,
the corresponding homogeneous equilibrium $\frac{m}{|B_R|}$ of (\ref{0}) is highly unstable in the sense that
any concentration-increasing perturbation thereof will lead to finite-time blow-up,
while throughout the subcritical range $(0,\umc(n,R))$ of mass values, no such drastic instability feature is present.\abs
According to its principal design mainly based on the use of comparison principles, however, the approach in \cite{win_MATANN}
seems uncapable of providing information on the precise location of these critical numbers.
In fact, beyond the identity $\umc(2,\cdot)\equiv 8\pi$,
as resulting from \cite{nagai1995} and \cite{suzuki_book}, and the rough one-sided
inequality $\umc(n,R)\ge 2n|B_1| R^{n-2}$, as implied for $n\ge 3$ and $R>0$ by \cite[Theorem 1.3]{win_MATANN},
farther-reaching knowledge appears to be widely lacking.
In particular, it even seems to have remained unclear so far whether $\umc(n,R)$ and $\omc(n,R)$
can differ for some $n\ge 2$ and $R>0$, or must coincide; especially in the two-dimensional case, to date it consequently
appears yet open whether the corresponding numbers $\omc(2,R)$ indeed play the role of a genuinely secondary critical mass
for some $R>0$, or whether they all actually coincide with the threshold $8\pi$ that has formed the dominating core
in the history of discoveries on mass criticality in the planar version of (\ref{0})
(\cite{nagai1995}, \cite{biler1998}, \cite{suzuki_book}).\abs
{\bf Main results.} \quad
The present study now intends to develop methods by which, inter alia, the sizes of
both $\omc(n,R)$ and $\umc(n,R)$ can precisely be determined.
In fact, in Section \ref{sect3} we shall see that a comparison argument involving appropriately rescaled Bessel functions
$\Theta$ (Lemma \ref{lem1})
yields a statement on local asymptotic stability of constant steady states, and hence especially on exclusion of blow-up
near such equilibria, under a condition on the considered mass level
which explicitly depends on the location of the associated principal zero of $\Theta$ (Lemma \ref{lem3}).
Section \ref{sect4} will thereafter indicate that these choices of comparison functions indeed capture essential features
of the actual solution behavior: An apparently novel type of refinement in a moment-based contradition argument,
namely, will reveal that under an assumption on the mass size essentially complementary to the above,
a certain moment-type functional, involving $\Theta$ as a weight, must cease to exist in finite time whenever $u_0$ lies
in the corresponding set of concentration-increasing perturbations of homogeneity according to (\ref{set}).\abs
To precisely formulate the accordingly obtained outcome, here and below we shall abbreviate
\be{init}
	u_0\in C^0_{rad}(\ov{B}_R) :=
	\Big\{ \varphi\in C^0(\ov{B}_R) \ \Big| \ \varphi \mbox{ is radially symmetric } \Big\}
\ee
for $n\ge 2$ and $R>0$, with radial symmetry exclusively referring to the choice of the origin as the respective center.
Apart from that, given $n\ge 2$ we let
\be{J}
	J_\frac{n}{2}(\xi):=\sum_{k=0}^\infty \frac{(-1)^k}{k! \Gamma(k+\frac{n}{2}+1)} (\frac{\xi}{2})^{2k+\frac{n}{2}},
	\qquad \xi>0,
\ee
represent the Bessel function of the first kind, and we let $\xis=\xis(n)$ denote its first positive zero.\abs
Our main results can then be formulated as follows.
\begin{theo}\label{theo5}
  Let $n\ge 2$ and $R>0$, and let
  \be{ms}
	\ms(n,R):=\frac{\omega_n R^{n-2}}{n} \cdot \xis^2(n),
  \ee
  where $\omega_n:=n|B_1|$,
  and where $\xis(n)$ denotes the first positive zero of the Bessel function $J_\frac{n}{2}$ in (\ref{J}).\abs
  i) \ For any choice of $m>0$ such that
  \be{5.3}
	m<\ms(n,R),
  \ee
  one can find $\eps=\eps(m,R)>0$ and $\alpha=\alpha(m,R)>0$ with the property that whenever $u_0$ satisfies (\ref{init})
  with $\int_{B_R} u_0=m$ and
  \be{5.4}
	\Big\| u_0-\frac{m}{|B_R|} \Big\|_{L^\infty(B_R)} \le \eps,
  \ee
  the problem (\ref{0}) admits a unique global classical solution $(u,v)$ with
  $u\in C^0(\ov{B}_R \times [0,\infty)) \cap C^{2,1}(\ov{B}_R \times (0,\infty))$ and
  $v\in C^{2,0}(\ov{B}_R \times (0,\infty))$, for which there exists $C=C(u_0)>0$ such that
  \be{5.5}
	\Big\| u(\cdot,t)-\frac{m}{|B_R|} \Big\|_{L^\infty(B_R)} \le C e^{-\alpha t}
	\qquad \mbox{for all } t>0.
  \ee
  ii) \ If $m>0$ is such that 
  \be{5.1}
	m>\ms(n,R),
  \ee
  then any function $u_0$ fulfilling (\ref{init}) with $\int_{B_R} u_0=m$ and
  \be{5.2}
	\frac{1}{|B_r|} \int_{B_r} u_0 \ge \frac{m}{|B_R|}
	\quad \mbox{for all } r\in (0,R),
	\qquad \mbox{but} \qquad
	u_0\not\equiv \frac{m}{|B_R|},
  \ee
  has the property that the corresponding solution of (\ref{0}) blows up in finite time; that is, there exist $\tm\in (0,\infty)$
  as well as functions
  $u\in C^0(\ov{B}_R \times [0,\tm)) \cap C^{2,1}(\ov{B}_R \times (0,\tm))$ and
  $v\in C^{2,0}(\ov{B}_R \times (0,\tm))$ such that $(u,v)$ solves (\ref{0}) classically in $B_R\times (0,\tm)$, and that
  \bas
	\limsup_{t\nearrow\tm} \|u(\cdot,t)\|_{L^\infty(B_R)} =\infty.
  \eas
\end{theo}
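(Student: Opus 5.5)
Both parts work with the cumulated mass function. With $s=r^n$, set
\bas
	w(s,t):=\int_0^{s^{1/n}} \rho^{n-1} u(\rho,t)\,d\rho, \qquad s\in [0,R^n],
\eas
for which (\ref{0}) becomes the scalar degenerate parabolic problem
\bas
	w_t = n^2 s^{2-\frac{2}{n}} w_{ss} + n w\Big(w_s-\frac{\mu}{n}\Big),
	\qquad w(0,t)=0, \quad w(R^n,t)=\frac{m}{\omega_n}.
\eas
This equation enjoys a comparison principle. The homogeneous state corresponds to $\overline{w}(s)=\frac{\mu}{n}s$, and condition (\ref{5.2}) says exactly that $w_0\ge \overline{w}$. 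Writing $w=\overline{w}+z$, the perturbation satisfies
\bas
	z_t = n^2 s^{2-\frac{2}{n}} z_{ss} + \mu s z_s + n z z_s .
\eas
Its linearization $\mathcal{L}z:=n^2 s^{2-2/n}z_{ss}+\mu s z_s$ is the radial operator behind the Bessel structure. Separating $z=e^{\lambda t}\Theta(s)$ and substituting $s=(r/c)^n$ converts $\mathcal{L}\Theta=\lambda\Theta$ into a Bessel-type equation. I expect the solution to be of the form $\Theta(s)\propto s^{1/2}J_{n/2}(\kappa s^{1/n})$ after an appropriate rescaling (this is the content of Lemma \ref{lem1}). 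The requirement $\Theta(R^n)=0$ forces the principal zero $\xis$ to sit at $s=R^n$, and the growth rate $\lambda$ of this principal mode changes sign exactly when $\mu\frac{|B_R|}{1}=\ms(n,R)$, that is, when $m=\frac{\omega_n R^{n-2}}{n}\xis^2$.

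For part i), I take $m<\ms$ and choose the scaling so that $\Theta$ is positive on $(0,R^n]$ with $\mathcal{L}\Theta\le -2\alpha\Theta$. This works because the first zero of the rescaled Bessel function then lies strictly beyond $R^n$, which supplies a positive eigenfunction with negative eigenvalue. I then use $\pm\delta e^{-\alpha t}\Theta$ as super- and subsolutions. The quadratic term $nzz_s$ is absorbed by taking $\delta$ small, via $|nzz_s|\le C\delta\,\Theta\cdot|\Theta'|/\Theta$, which requires $\Theta'/\Theta$ to be bounded near $s=0$. Since $\Theta\sim s$ there, the boundary behaviour is compatible. This comparison controls $|w-\overline{w}|$ exponentially. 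Converting it to the $L^\infty$ bound (\ref{5.5}) for $u=n w_s$ then takes parabolic regularity, either by bootstrapping the exponential smallness of $\|\nabla v-\nabla\overline{v}\|_{L^\infty}$ through semigroup estimates, or by comparing $w_s$ directly. Local existence, uniqueness and extensibility are standard.

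For part ii), I take $m>\ms$, so that $\xis$ lies inside the interval and, after scaling, $\Theta\ge 0$ vanishes at $R^n$. Since $w_0\ge\overline{w}$ is preserved by comparison, $z\ge 0$ for all times. I consider the moment $\phi(t)=\int_0^{R^n}\Theta(s)z(s,t)\,ds$, possibly with an additional weight $s^{-\gamma}$ to make the adjoint computation close. Integrating by parts, the linear part yields $\lambda\phi$ with $\lambda>0$, plus boundary terms that I expect to vanish or have a good sign. The nonlinear part gives $\int\Theta\, n z z_s=-\frac{n}{2}\int\Theta' z^2$. Here lies the main obstacle: $\Theta'$ changes sign, so this term is not obviously favourable. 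The refinement I would try first is to exploit monotonicity information such as $z\ge0$ together with $z_s\ge -\mu/n$ (since $w_s\ge0$). Combined with a Jensen/Cauchy--Schwarz inequality on the region where $\Theta'>0$, and with the negative part controlled by the linear gain, this should give a superlinear inequality $\phi'\ge \lambda\phi+c\phi^2$. Since $\phi(0)>0$ because $w_0\not\equiv\overline{w}$, this forces $\phi$ to blow up in finite time. On the other hand $\phi$ stays bounded because $0\le z\le m/\omega_n$. The resulting contradiction shows that $\tm<\infty$, and the standard extensibility criterion then yields $\limsup\|u\|_{L^\infty}=\infty$. If the quadratic sign issue cannot be resolved directly, I would instead derive $\phi'\ge\lambda\phi$ and use it to push $z$ into the set where the known criterion of \cite{win_MATANN} applies.
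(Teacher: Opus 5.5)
\textbf{Part ii) has a genuine gap: the nonlinear term. Part i) is essentially the paper's argument once an error in your transformed equation is fixed.}

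\emph{The transformed equation.} It is incorrect, and the error propagates into the spectral reasoning of both parts. Since $r^{n-1}v_r=\frac{\mu s}{n}-w$, the drift contributes $-\mu s w_s$, not $-\mu w$. Thus $w_t=n^2s^{2-\frac2n}w_{ss}+nw_s\,(w-\frac{\mu s}{n})$ and $z_t=n^2s^{2-\frac2n}z_{ss}+nzz_s+\mu z$. With your operator $n^2s^{2-\frac2n}\partial_{ss}+\mu s\partial_s$, the separated equation in $r=s^{1/n}$ reads $\phi_{rr}-\frac{n-1}{r}\phi_r+\frac{\mu r}{n}\phi_r=\lambda\phi$. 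That is of Kummer type, not Bessel type, so the profile and threshold you ``expect'' do not follow from it. With the correct operator, $\mu$ is a pure spectral shift: $e(s):=s^{1/2}J_{\frac n2}(\xis s^{1/n}/R)$ satisfies $n^2s^{2-\frac2n}e''=-\frac{\xis^2}{R^2}e$ for every $m$. The growth rate is $\mu-\frac{\xis^2}{R^2}$, and there is no $m$-dependent rescaling in part ii).

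\emph{Part i) after the correction.} The paper compares $z$ with $\pm\eps_0e^{-\alpha_0t}e(s)$ for $0<\alpha_0<\frac{\xis^2}{R^2}-\mu$; your variant with a first zero beyond $R^n$ also works. Absorbing $nzz_s$ requires boundedness of $e'$, not of $e'/e$, which blows up at $s=0$. To reach (\ref{5.5}), the paper uses the Bernstein-type criterion of Lemma \ref{lem79}, then a Gagliardo--Nirenberg interpolation of $z_s$ between a uniform $W^{2,q}$ bound and the exponential $L^\infty$ decay of $z$. Your semigroup route is also viable, because $|v_r|=|z|/r^{n-1}\le C\eps_0e^{-\alpha_0t}r$.

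\emph{The gap in part ii).} It sits exactly where you flag it. You fix neither the weight nor the sign of the quadratic term, and your proposed remedy cannot work. For the moment $\int ez$, the set $\{e'>0\}$ contains a neighbourhood of $s=0$, where $e'(s)\to e'(0)>0$ while $e(s)\le Cs$. So $-\frac n2\int e'z^2$ cannot be dominated by the linear gain $\lambda\int ez$ using only $0\le z\le\frac{m}{\omega_n}$ and $z_s\ge-\frac{\mu}{n}$; note $\lambda$ is arbitrarily small when $m$ is close to $\ms$. You would need $z\le Cs$ uniformly in time, which is not available; by Lemma \ref{lem79} it would already exclude blow-up. Moreover, without the right weight the linear part does not reduce to $\lambda\phi$ at all, so the sharp threshold would be lost.

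\emph{The missing idea.} The weight that closes the linear part also fixes the sign. Take the adjoint eigenfunction $\psi(s):=s^{\frac2n-2}e(s)=s^{\frac2n-1}\Theta(s)$ with $\Theta(s)=s^{-\frac12}J_{\frac n2}(\xis s^{1/n}/R)$; this is your $s^{-\gamma}$ with $\gamma=2-\frac2n$.
\begin{itemize}
\item Two integrations by parts, with all boundary terms vanishing since $e(0)=e(R^n)=z(0,t)=z(R^n,t)=0$, give $\int_0^{R^n}\psi\,n^2s^{2-\frac2n}z_{ss}\,ds=n^2\int_0^{R^n}e''z\,ds=-\frac{\xis^2}{R^2}\int_0^{R^n}\psi z\,ds$.
\item $s^{\frac2n-1}$ is nonincreasing for $n\ge2$, and $\Theta$ is positive and decreasing on $(0,R^n)$. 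Hence $\psi$ is nonincreasing, and $n\int_0^{R^n}\psi zz_s\,ds=-\frac n2\int_0^{R^n}\psi_sz^2\,ds\ge0$. This does not even use $z\ge0$.
\item Therefore $\Phi(t):=\int_0^{R^n}\psi z\,ds$ satisfies $\Phi'\ge(\mu-\frac{\xis^2}{R^2})\Phi$ with $\Phi(0)>0$.
\end{itemize}
No superlinear inequality is needed, and neither is an appeal to \cite{win_MATANN}, which could not give the sharp threshold anyway. If $\tm=\infty$, then $\Phi(t)\ge\Phi(0)e^{(\mu-\xis^2/R^2)t}\to\infty$. This contradicts $\Phi\le\frac{m}{\omega_n}\int_0^{R^n}\psi\,ds<\infty$, which follows from $z\le w\le\frac{m}{\omega_n}$.
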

As an immediate consequence, we can thereby make sure that both $\omc(n,R)$ and $\umc(n,R)$ coincide
with $\ms(n,R)$ whenever $n\ge 2$ and $R>0$:
\begin{cor}\label{cor55}
  For $n\ge 2$ and $R>0$, let $\omc(n,R)$, $\umc(n,R)$ and $\ms(n,R)$ be as in (\ref{omc}), (\ref{umc}) and (\ref{ms}).
  Then
  \bas
	\omc(n,R)=\umc(n,R)=\ms(n,R)
	\qquad \mbox{for all $n\ge 2$ and } R>0.
  \eas
\end{cor}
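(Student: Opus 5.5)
The corollary follows from Theorem \ref{theo5} together with the elementary inequality $\umc(n,R)\le\omc(n,R)$. The plan is to first establish that inequality, then show $\omc\le\ms$ from part ii) and $\umc\ge\ms$ from part i).

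\textbf{Step 1: $\umc\le\omc$.} Take any $m>\omc(n,R)$. By (\ref{omc}) there is some $m'\le m$ with $m'<\ldots$; more precisely, there is a level at or below $m$ at which every datum in $\set$ leads to blow-up. We actually need this property at $m$ itself. To avoid monotonicity issues, I would not rely on Step 1 separately. Instead I would argue directly: $\omc\le \ms$ and $\ms\le\umc$ from the theorem, and then check the reverse inequalities using the same two parts of Theorem \ref{theo5}.

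\textbf{Step 2: $\omc\le\ms$.} Part ii) states that for every $m>\ms$, every $u_0\in\set(m,R)$ produces a solution blowing up in finite time. Hence the set in (\ref{omc}) contains $(\ms,\infty)$, so its infimum is at most $\ms$.

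\textbf{Step 3: $\umc\ge\ms$.} Fix $m<\ms$ and let $\eps$ be as in part i). Choose a smooth radial perturbation $u_0=\frac{m}{|B_R|}+\delta\varphi$, where $\varphi$ is radial, nonincreasing in $|x|$, nonconstant, and has $\int_{B_R}\varphi=0$. Such a $\varphi$ satisfies $\frac1{|B_r|}\int_{B_r}\varphi\ge 0$ for all $r\in(0,R)$, because the average of a radially nonincreasing function over $B_r$ decreases in $r$ and equals $0$ at $r=R$. Taking $\delta>0$ small ensures that $u_0\ge0$ and that (\ref{5.4}) holds, so $u_0\in\set(m,R)$. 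Part i) then yields a global solution, bounded by (\ref{5.5}). Hence $m$ belongs to the set in (\ref{umc}), and $\umc\ge\ms$.

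\textbf{Step 4: reverse inequalities.} If $m>\ms$, part ii) forbids any global bounded solution for data in $\set(m,R)$. So every $m$ in the set defining $\umc$ satisfies $m\le\ms$, giving $\umc\le\ms$. If $m<\ms$, Step 3 exhibits a $u_0\in\set(m,R)$ whose solution is global and bounded. By uniqueness of classical solutions it does not blow up, so $m$ is not in the set in (\ref{omc}), and therefore that set is contained in $[\ms,\infty)$. Combining, $\omc\ge\ms$. Together with Steps 2 and 3 this gives $\omc=\umc=\ms$.

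The only mildly delicate point is the construction in Step 3. One must verify that the datum built there lies in $\set(m,R)$ and also in the $\eps$-neighborhood of the constant, and the monotonicity argument for the averages handles the first requirement. Uniqueness of the solution, invoked in Step 4, is part of the well-posedness theory, and for the data of Step 3 it is also provided directly by part i).
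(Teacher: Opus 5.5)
Your argument is correct and is exactly the unpacking of what the paper calls an immediate consequence of Theorem \ref{theo5}. Part ii) yields $\omc(n,R)\le\ms(n,R)$ and $\umc(n,R)\le\ms(n,R)$, while part i), applied to your small radially nonincreasing mean-zero perturbation (whose membership in $\set(m,R)$ you verify correctly), yields $\umc(n,R)\ge\ms(n,R)$ and, via uniqueness, $\omc(n,R)\ge\ms(n,R)$; the aborted Step 1 is never used and should simply be deleted.
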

{\bf Remark.} \quad
i) \ Recalling that $\xis(2)\approx 3.83170$,
from the above we particularly obtain that
\bas
	\ms(2,R)\approx 14.68197 \cdot \pi,
\eas
which especially means that the number $\ms(2,R)$ is yet independent of $R$ but strictly
larger than the corresponding purely explosion-focused critical mass level $m_c(2,\cdot)\equiv 8\pi$.
Accordingly, Theorem \ref{theo5} and Corollary \ref{cor55} thus identify
a genuinely secondary critical mass phenomenon in the two-dimensional
version of (\ref{0}).
We emphasize that up to a single exceptional finding focusing on effects of boundary conditions (\cite{flw}),
we are not aware of any study in which a comparable feature has been discovered.\abs
ii) \ Let us moreover underline that if and only if $n\ge 3$,
the phenomenon detected above is domain-dependent in the sense that then
$\ms(n,\cdot)$ is strictly increasing on $(0,\infty)$.
\mysection{Local existence and transformation to a scalar problem}\label{sect2}
The following basic result on local solvability in (\ref{0}) is essentially well-known
(see, e.g., the arguments in \cite{biler1998} and \cite{djie_win}).	
\begin{lem}\label{lem_loc}
  Assume that $n\ge 2$ and $R>0$, and that (\ref{init}) holds.
  Then there exist $\tm\in (0,\infty]$ and uniquely determined functions
  \bas
	\left\{ \begin{array}{l}
	u \in C^0(\ov{B}_R\times [0,\tm)) \cap C^{2,1}(\ov{B}_R\times (0,\tm)) \qquad \mbox{and} \\[1mm]
	v\in \bigcap_{q>n} L^\infty_{loc}([0,\tm);W^{1,q}(B_R)) \cap C^{2,0}(\ov{B}_R\times (0,\tm)),
	\end{array} \right.
  \eas
  such that
  \bas
	\int_{B_R} v(\cdot,t)=0
	\qquad \mbox{for all } t\in (0,\tm),
  \eas
  that $(u(\cdot,t),v(\cdot,t))$ is radially symmetric for all $t\in (0,\tm)$, that $(u,v)$
  solves (\ref{0}) classically in $B_R\times (0,\tm)$, and that
  \be{ext}
	\mbox{if $\tm<\infty$, \quad then \quad }
	\limsup_{t\nearrow\tm} \|u(\cdot,t)\|_{L^\infty(B_R)} = \infty.
  \ee
  Moreover, $u>0$ in $\bom\times (0,\tm)$ and
  \be{mass}
	\int_{B_R} u(\cdot,t) = \int_{B_R} u_0
	\qquad \mbox{for all } t\in (0,\tm).
  \ee
\end{lem}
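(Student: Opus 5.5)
We argue by a contraction-mapping fixed point for the $u$-equation, with $v$ eliminated through the elliptic subproblem; all further properties then follow from parabolic regularity, uniqueness and the comparison principle. Given $\tm$-local $u$, the elliptic problem $0=\Delta v-\mu+u$ with $\frac{\partial v}{\partial\nu}=0$ and $\int_{B_R}v=0$ has a unique solution, because $\int_{B_R}(u-\mu)=0$ once mass is conserved. Elliptic $L^p$ theory gives $\|v\|_{W^{2,p}}\le C\|u-\mu\|_{L^p}$ for all $p<\infty$, and hence $\|\nabla v\|_{L^\infty}\le C\|u\|_{L^\infty}$. Here $\mu=\frac{1}{|B_R|}\int_{B_R}u_0$ is fixed, so the solution operator $u\mapsto v=:\mathcal{V}[u]$ is linear and bounded.

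For $T\in(0,1)$, let $X:=\{u\in C^0(\ov{B}_R\times[0,T]) \mid \|u\|_{L^\infty}\le \|u_0\|_{L^\infty}+1\}$. Define
\[
\Phi[u](t):=e^{t\Delta}u_0-\int_0^t e^{(t-s)\Delta}\nabla\cdot\big(u\nabla\mathcal{V}[u]\big)(s)\,ds,
\]
where $(e^{t\Delta})$ is the Neumann heat semigroup. The smoothing estimate $\|e^{t\Delta}\nabla\cdot\varphi\|_{L^\infty}\le C(1+t^{-\frac12-\frac{n}{2q}})\|\varphi\|_{L^q}$ with $q>n$ yields a contraction on $X$ for small $T$, with $T$ depending only on $\|u_0\|_{L^\infty}$. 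Continuity up to $t=0$ holds because $u_0\in C^0(\ov{B}_R)$ and the Neumann semigroup is strongly continuous on $C^0(\ov{B}_R)$. The fixed point is extended maximally. Since the existence time is controlled by the $L^\infty$ norm alone, the extensibility criterion (\ref{ext}) follows. The bound on $\nabla v$ in $L^\infty_{loc}([0,\tm);L^q)$ for all $q>n$ comes from the elliptic estimates.

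Regularity $u\in C^{2,1}(\ov{B}_R\times(0,\tm))$ follows by bootstrapping, in line with \cite{biler1998} and \cite{djie_win}. We first get Hölder continuity of $u$ from the mild formulation, then $v\in C^{2+\theta}$ in space from Schauder estimates. The $u$-equation is then a linear parabolic equation with Hölder coefficients, $u_t=\Delta u-\nabla v\cdot\nabla u-u(u-\mu)$, using $\Delta v=\mu-u$. Parabolic Schauder theory finally gives classical regularity.

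The remaining properties are short consequences of this framework. Uniqueness follows from the contraction estimate applied to two solutions on a common interval. Radial symmetry follows from uniqueness together with rotation invariance of the problem, as rotated solutions are again solutions. Mass conservation follows by integrating the $u$-equation and using the Neumann conditions, and it justifies the use of the compatibility condition a posteriori.

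Positivity is where some care is needed. Writing the equation as $u_t=\Delta u-\nabla v\cdot\nabla u+(\mu-u)u$ with bounded coefficients on $[\tau,T]$, the comparison principle gives $u\ge0$. Here $u_0\ge0$ is assumed, as in (\ref{set}); otherwise nonnegativity is needed implicitly. The strong maximum principle and the Hopf lemma then give $u>0$ in $\bom\times(0,\tm)$ unless $u\equiv0$, which the positive mass excludes. The main technical point is handling merely continuous initial data near $t=0$. For this we would use the mild formulation and the semigroup's strong continuity rather than classical compatibility conditions.
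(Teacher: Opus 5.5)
Your strategy (a contraction argument for the Duhamel formulation with $v$ eliminated through the elliptic problem, followed by bootstrapping, uniqueness, rotation invariance, mass conservation and the maximum principle) is the standard one the paper has in mind. The paper gives no proof of its own and only points to the arguments in \cite{biler1998} and \cite{djie_win}.

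One step does not work as you wrote it. With $\mu=\frac{1}{|B_R|}\int_{B_R}u_0$ fixed, the Neumann problem $\Delta v=\mu-u$ is solvable only if $\int_{B_R}u(\cdot,t)=\int_{B_R}u_0$. A general element of your set $X$ does not satisfy this, so $\Phi$ is not defined on $X$. Invoking mass conservation of the fixed point ``a posteriori'' is circular, because the map must be defined before any fixed point exists. The repair is standard, and either of two routes works:
\begin{itemize}
\item Work in the closed subset of $X$ of those $u$ with $\int_{B_R}u(\cdot,t)=\int_{B_R}u_0$ for all $t\in[0,T]$. $\Phi$ maps this subset into itself, since the Neumann semigroup preserves integrals and $\int_{B_R}e^{(t-s)\Delta}\nabla\cdot\varphi=-\int_{B_R}\varphi\cdot\nabla e^{(t-s)\Delta}1=0$.
\item Define $\mathcal{V}[u](\cdot,t)$ by $\Delta v=\frac{1}{|B_R|}\int_{B_R}u(\cdot,t)-u$. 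This also makes $\mathcal{V}$ genuinely linear; with $\mu$ fixed it is only affine. Then check afterwards, by the same computation, that the fixed point has mass $\int_{B_R}u_0$ and hence solves (\ref{0}).
\end{itemize}

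Two smaller points. First, since $\Delta v=\mu-u$, one has $-\nabla\cdot(u\nabla v)=-\nabla v\cdot\nabla u+u(u-\mu)$. So the zero-order term has the opposite sign to the one you wrote twice. This does not affect your argument, which uses only boundedness of that coefficient. Second, your caveat about positivity is correct: (\ref{init}) contains no sign condition. The conclusion $u>0$ in $\overline{B}_R\times(0,\tm)$ therefore needs $u_0\ge 0$ and $u_0\not\equiv 0$ to be tacitly assumed, as in (\ref{set}).
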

Given $u_0$ fulfilling (\ref{init}) and letting $\tm$ and $(u,v)$ be as accordingly
provided by Lemma \ref{lem_loc}, following a standard transformation precedure (\cite{JL})
we introduce the corresponding cumulated densities by defining
\be{w}
	w(s,t):=\int_0^{s^\frac{1}{n}} \rho^{n-1} u(\rho,t) d\rho,
	\qquad s\in [0,R^n], \ t\in [0,\tm),
\ee
and
\be{w0}
	w_0(s):=\int_0^{s^\frac{1}{n}} \rho^{n-1} u_0(\rho) d\rho,
	\qquad s\in [0,R^n],
\ee
noting that then
\be{ws}
	w_s(s,t)=\frac{1}{n} u(s^\frac{1}{n},t)
	\quad \mbox{and} \quad
	w_{ss}(s,t)=\frac{1}{n^2} s^{\frac{1}{n}-1} u_r(s^\frac{1}{n},t),
	\qquad s\in (0,R^n), \ t\in (0,\tm).
\ee
According to Lemma \ref{lem_loc}, we thus obtain that $w_s>0$ in $[0,R^n] \times [0,\tm)$, and that $w$ solves
the Dirichlet problem
\be{0w}
    	\left\{ \begin{array}{ll}
	w_t = n^2 s^{2-\frac{2}{n}} w_{ss} + nww_s - \mu s w_s,
	\qquad & s\in (0,R^n), \ t\in (0,\tm), \\[1mm]
	w(0,t)=0, \quad w(R^n,t)=\frac{m}{\omega_n},
	\qquad & t\in (0,\tm), \\[1mm]
	w(s,0)=w_0(s),
	& s\in (0,R^n),
 	\end{array} \right.
\ee
with $m:=\io u_0$ and $\mu:=\frac{nm}{\omega_n R^n}$, where $\omega_n:=n|B_1|$.\abs
As an additional preliminary ingredient, let us recall the following boundedness criterion that has been
derived on the basis of a Bernstein-type comparison argument in \cite[Lemma 4.1]{win_MATANN}.
\begin{lem}\label{lem79}
  Let $n\ge 2$ and $R>0$, and suppose that $u_0$ satisfies (\ref{init}) and is such that the function $w$ defined in
  (\ref{w}) has the property that
  \be{79.1}
	\sup_{(s,t)\in (0,R^n)\times (0,\tm)} \frac{w(s,t)}{s} < \infty.
  \ee
  Then there exists $C>0$ such that
  \be{79.2}
	w_s(s,t) \le C
	\qquad \mbox{for all $s\in (0,R^n)$ and } t\in (0,\tm).
  \ee
\end{lem}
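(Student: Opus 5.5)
The plan is to upgrade the linear bound $w\le Ks$ from (\ref{79.1}) to a bound on $z:=w_s=\frac1n u(s^{1/n},t)$ by a Bernstein-type maximum principle argument for $z$. I would work with (\ref{0w}), and in the region away from $s=0$ I would use standard radial regularity.

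Differentiating (\ref{0w}) in $s$ gives
\bas
	z_t = n^2 s^{2-\frac{2}{n}} z_{ss} + \Big(2n(n-1) s^{1-\frac{2}{n}} + nw - \mu s\Big) z_s + nz^2 - \mu z .
\eas
The only dangerous term is the superlinear source $nz^2$. The idea is to absorb it by exploiting $w\le Ks$ through an auxiliary function coupling $z$ with $w$. For instance, I would consider
\bas
	\Phi(s,t) := \frac{w_s(s,t)}{\phi\big(\frac{w(s,t)}{s}\big)}
	\qquad \mbox{or} \qquad
	\Phi := w_s - A\,\frac{w}{s} \cdot \psi(s),
\eas
with a positive function $\phi$ on $[0,K]$, respectively constants $A$ and a cutoff $\psi$, chosen so that the parabolic operator applied to $\Phi$ is nonpositive wherever $\Phi$ is large.

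The key observations for this are the following. First, $q:=w/s$ is bounded by $K$. Second, $q$ satisfies its own parabolic equation, in which $nz^2$ reappears through the identity $w_s = q + s q_s$. Consequently, at an interior maximum of $\Phi$ the relation $z_s = \phi'(q) q_s z/\phi(q)$ lets the $z^2$ contributions from $z$ and from $q$ be compared, and a suitable $\phi$ (e.g.\ exponential, $\phi(q)=e^{\beta q}$ with large $\beta$, or $\phi(q)=(K+1-q)^{-\gamma}$) turns the net quadratic term negative. The maximum principle then bounds $\Phi$ by its values at $t=0$, at $s=R^n$ and near $s=0$, and $\Phi$ bounded together with $\phi\le C$ on $[0,K]$ yields (\ref{79.2}).

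The boundary contributions need separate arguments. Near $s=0$, regularity of $u$ shows that $z(0,t)=q(0,t)=\frac1n u(0,t)$ is finite for $t<\tm$, and $\Phi$ at $s=0$ is controlled once $\Phi$ itself is bounded there in terms of $K$; this is the delicate part. At $s=R^n$ the Neumann condition $u_r=0$ gives $z_s=0$, so no maximum can be created on this boundary beyond what the interior analysis allows. Away from the origin, say on $[\frac{R^n}{2^n},R^n]$, I would instead use the standard radial fact that $u$ stays bounded outside any ball around $0$. This holds since there $v_r=-r^{1-n}\big(\omega_n^{-1}\cdot\ldots\big)$ is controlled by the mass, and one-dimensional parabolic estimates apply.

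The main obstacle is the exact choice of $\phi$ making the interior computation close, since the weight $s^{2-2/n}$ degenerates at $s=0$. If the Bernstein computation does not close, I would fall back on an $\eps$-regularity argument in the original variables. The bound $w\le Ks$ means $\int_{B_r}u\le Cr^n$, and together with radial symmetry this shows that the Morrey-type quantity $\sup_{x,r\le r_0} r^{2-n}\int_{B_r(x)}u$ is small for small $r_0$. This smallness is exactly the scaling-critical condition under which $L^p$-testing ($\frac{d}{dt}\int u^p\le -c\int|\nabla u^{p/2}|^2+(p-1)\int u^{p+1}$, with $\int u^{p+1}$ absorbed by a Morrey-space Gagliardo--Nirenberg inequality) followed by a Moser iteration yields an $L^\infty$ bound for $u$, i.e.\ (\ref{79.2}).
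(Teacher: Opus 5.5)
\paragraph{There is a genuine gap in your primary (Bernstein) route.} The paper does not prove this lemma itself; it imports it from \cite[Lemma 4.1]{win_MATANN}, where it comes from a Bernstein-type comparison argument, so your primary route is of the right type. As written, though, it does not close, and it fails exactly at the step you flag as the main obstacle.

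Take your $z=w_s$, $q=w/s$ and $\Phi=z/\phi(q)$. The drifts of the $z$- and $q$-equations differ by $2ns^{1-\frac2n}$, and $q_s=\frac{z-q}{s}$. Using these, one finds at every point where $\Phi_s=0$:
\[
\Phi_t-n^2s^{2-\frac2n}\Phi_{ss}-b\,\Phi_s
=\frac{nz^2-\mu z}{\phi}-\frac{z\phi'}{\phi^2}\Big(nq^2-\mu q+2ns^{-\frac2n}(z-q)\Big)
+n^2s^{-\frac2n}\,\frac{z(z-q)^2\,\phi''}{\phi^2},
\]
with $b=2n(n-1)s^{1-\frac2n}+nw-\mu s$.

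Your heuristic about the net quadratic terms is correct: with $\phi=e^{\beta q}$ and $\beta>R^2/2$ they are negative. But the decisive term is the cubic one. It comes from the second-derivative terms, which the heuristic ignores, and it has the sign of $\phi''$. Both of your examples, $e^{\beta q}$ and $(K+1-q)^{-\gamma}$, are convex. So at a maximum with large $z$ the right-hand side is positive and no contradiction arises.

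\paragraph{How to repair it.} Take $\phi$ concave and increasing, e.g.\ $\phi(q)=1+4Kq-q^2$ on $[0,K]$, with $K$ the supremum in (\ref{79.1}). Then for $z\ge K$ the right-hand side is at most
\[
nz^2+4\mu K^2z-\frac{2n^2}{R^2(1+3K^2)^2}\,z(z-K)^2,
\]
which is negative for large $z$. The degenerate factor $s^{-\frac2n}\ge R^{-2}$ helps rather than hurts. The boundaries are then harmless:
\begin{itemize}
\item The point $s=0$ is not delicate at all: $w_s(0,t)=\lim_{s\searrow0}\frac{w(s,t)}{s}\le K$ directly by (\ref{79.1}).
\item At $s=R^n$ the Neumann condition gives $\Phi_s=-\frac{z\phi'(z-q)}{R^n\phi^2}<0$ whenever $z>q$, so no maximum occurs there.
\end{itemize}

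\paragraph{Your fallback does not fill the gap as stated.} The Morrey smallness itself is fine. However, absorbing $\int u^{p+1}$ by a Morrey-space Gagliardo--Nirenberg inequality is not a standard tool for $n\ge3$. Even its linear Fefferman--Phong-type analogue fails for $L^1$-based Morrey norms, and you give no argument for it.

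The detour is also unnecessary. From the second equation, $v_r=\frac{\mu r}{n}-r^{1-n}w(r^n,t)$, so (\ref{79.1}) gives $|\nabla v|\le(\frac{\mu}{n}+K)R$ uniformly in $B_R\times(0,\tm)$. In the $L^p$ testing, keep the cross-diffusive term as $p(p-1)\int u^{p-1}\nabla u\cdot\nabla v$ rather than integrating by parts into $(p-1)\int u^p(u-\mu)$. Young's inequality, the uniform drift bound and mass conservation then give a time-uniform $L^\infty$ bound for $u$, i.e.\ (\ref{79.2}). The last step can use a standard Moser--Alikakos iteration or Neumann semigroup estimates.

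This bounded-drift route is more elementary than the Bernstein argument. The Bernstein argument with concave $\phi$, by contrast, works purely at the level of the scalar equation for $w$.
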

In large parts of our analysis, instead of directly referring to the functions $w$ defined above we shall rather
consider their deviations from the corresponding steady-state solutions: Namely, letting $w$ be as in (\ref{w}) we define
defining
\be{z}
	z(s,t):=w(s,t)-\frac{\mu s}{n},
	\qquad s\in [0,R^n], \ t\in [0,\tm).
\ee
and note that then, as a direct consequence of (\ref{0w}),
\be{0z}
    	\left\{ \begin{array}{ll}
	z_t = n^2 s^{2-\frac{2}{n}} z_{ss} + nzz_s + \mu z,
	\qquad & s\in (0,R^n), \ t\in (0,\tm), \\[1mm]
	z(0,t)=z(R^n,t)=0,
	\qquad & t\in (0,\tm), \\[1mm]
	z(s,0)=w_0(s)-\frac{\mu s}{n},
	& s\in (0,R^n).
 	\end{array} \right.
\ee
\mysection{A comparison-based approach toward local asymptotic stability}\label{sect3}
Our strategy toward the derivation of Theorem \ref{theo5} i) will consist in the use of appropriately decaying
sub- and supersolutions to be used as comparison functions in an analysis of (\ref{0z}).
Our selections in this regard (cf.~(\ref{oz}) and (\ref{uz}) will at their core be determined by suitably rescaled
versions of the Bessel functions from (\ref{J}), to be defined and described in the following.
\begin{lem}\label{lem1}
  Given $n\ge 2$ and $R>0$, we let
  \be{1.1}
	\Theta(s):=s^{-\frac{1}{2}} J_\frac{n}{2} \bigg( \frac{\xis s^\frac{1}{n}}{R}\bigg),
	\qquad s>0,
  \ee
  where $\xis=\xis(n)$ is as specified in Theorem \ref{theo5}.
  Then $\Theta$ belongs to $C^2((0,\infty))$ and satisfies
  \be{1.2}
	s\Theta_{ss}(s) + 2\Theta_s(s) = - \frac{\xis^2}{n^2 R^2} s^{\frac{2}{n}-1} \Theta(s)
	\qquad \mbox{for all } s>0.
  \ee
  Moreover,
  \be{1.3}
	\Theta(s) \to \frac{\xis^\frac{n}{2}}{(2R)^\frac{n}{2} \Gamma(\frac{n+2}{2})}
	\qquad \mbox{as } s\searrow 0
  \ee
  and
  \be{1.4}
	\Theta_s(s) < 0
	\qquad \mbox{for all } s\in (0,R^n],
  \ee
  and we have
  \be{1.5}
	s\Theta_s(s) \to 0
	\qquad \mbox{as } s\searrow 0.
  \ee
\end{lem}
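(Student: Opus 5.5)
The plan is to work from the power series (\ref{J}) rather than from general Bessel function theory, because the prefactor $s^{-\frac{1}{2}}$ in (\ref{1.1}) exactly cancels the leading power $\xi^{\frac{n}{2}}$. Write $a:=\frac{\xis}{R}$ and $\xi=as^{\frac{1}{n}}$. Then $(\frac{\xi}{2})^{2k+\frac{n}{2}} = (\frac{a}{2})^{2k+\frac{n}{2}} s^{\frac{2k}{n}+\frac{1}{2}}$, so
\bas
	\Theta(s)=\sum_{k=0}^\infty b_k s^{\frac{2k}{n}},
	\qquad
	b_k:=\frac{(-1)^k}{k!\,\Gamma(k+\frac{n}{2}+1)}\Big(\frac{a}{2}\Big)^{2k+\frac{n}{2}} .
\eas
This is a power series in $y=s^{\frac{1}{n}}$ with infinite radius of convergence. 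It may therefore be differentiated termwise on $(0,\infty)$, which gives $\Theta\in C^2((0,\infty))$.

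Two of the claims follow at once from this representation. Letting $s\searrow0$ leaves only the $k=0$ term, namely $b_0=\frac{\xis^{n/2}}{(2R)^{n/2}\Gamma(\frac{n+2}{2})}$, which is (\ref{1.3}). Moreover $s\Theta_s(s)=\sum_{k\ge1}\frac{2k}{n}b_k s^{\frac{2k}{n}}$ contains only positive powers of $s$, so it tends to zero; this is (\ref{1.5}).

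For (\ref{1.2}) I will compare coefficients. For each monomial,
\bas
	s\,(s^{\frac{2k}{n}})_{ss}+2(s^{\frac{2k}{n}})_s=\frac{2k(2k+n)}{n^2}\, s^{\frac{2k}{n}-1}.
\eas
The right-hand side of (\ref{1.2}) equals $-\frac{a^2}{n^2}\sum_{k\ge1} b_{k-1}s^{\frac{2k}{n}-1}$. Identity (\ref{1.2}) thus reduces to the recursion $2k(2k+n)b_k=-a^2b_{k-1}$ for $k\ge1$. This recursion holds because $\frac{b_k}{b_{k-1}}=-\frac{(a/2)^2}{k(k+\frac{n}{2})}$ and $4k(k+\frac{n}{2})=2k(2k+n)$.

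The step I expect to need the most care is the monotonicity (\ref{1.4}). I will obtain it from (\ref{1.2}) rather than from interlacing of Bessel zeros. Multiplying (\ref{1.2}) by $s$ shows that it is equivalent to
\bas
	(s^2\Theta_s)_s=-\frac{a^2}{n^2}s^{\frac{2}{n}}\Theta
	\qquad \mbox{on } (0,\infty).
\eas
By (\ref{1.5}), $s^2\Theta_s(s)=s\cdot s\Theta_s(s)\to0$ as $s\searrow0$. Integrating over $(0,s)$ then yields
\bas
	s^2\Theta_s(s)=-\frac{a^2}{n^2}\int_0^s \sigma^{\frac{2}{n}}\Theta(\sigma)\,d\sigma .
\eas
It remains to check that $\Theta>0$ on $(0,R^n)$. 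For $s\in(0,R^n)$ we have $\xi=as^{\frac1n}\in(0,\xis)$. The function $J_{\frac n2}$ is positive near $0$, since its leading series term is positive, and $\xis$ is by definition its first positive zero, so $J_{\frac n2}>0$ on $(0,\xis)$. Hence $\Theta>0$ on $(0,R^n)$. The integral above is therefore strictly positive for every $s\in(0,R^n]$, and this gives $\Theta_s(s)<0$ there, including at the endpoint $s=R^n$.
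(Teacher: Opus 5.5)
Your argument is correct and complete.

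For (\ref{1.3}) and (\ref{1.5}) it is the same as the paper's, which also reads both off the expansion of $\Theta$ in powers of $s^{2/n}$.

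For (\ref{1.2}), the paper differentiates (\ref{1.1}) twice and invokes Bessel's equation $\xi^2 J_{\frac n2}''+\xi J_{\frac n2}'+(\xi^2-\frac{n^2}{4})J_{\frac n2}=0$ as a known fact. Your recursion $2k(2k+n)b_k=-a^2 b_{k-1}$ effectively re-derives that equation from the series. It is self-contained at the cost of a little bookkeeping.

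The genuine difference lies in (\ref{1.4}). The paper only observes that, by (\ref{1.2}), $\Theta$ cannot have a local minimum at a point where $\Theta>0$. To turn this into strict monotonicity, one still has to:
\begin{itemize}
\item know that $\Theta$ decreases initially (e.g.\ from $b_1<0$ in your notation);
\item rule out critical points where $\Theta_s$ vanishes without a sign change;
\item exclude $\Theta_s(R^n)=0$ at the endpoint, for instance by uniqueness for the ODE (\ref{1.2}); there $\Theta(R^n)=0$, so the local-minimum remark says nothing.
\end{itemize}
Your divergence-form identity $s^2\Theta_s(s)=-\frac{\xis^2}{n^2R^2}\int_0^s\sigma^{\frac2n}\Theta(\sigma)\,d\sigma$, with the boundary term at $0$ removed via (\ref{1.5}), settles all of these points at once. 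It yields $\Theta_s<0$ on all of $(0,R^n]$, endpoint included, using only that $J_{\frac n2}>0$ on $(0,\xis)$, which the paper's reasoning needs as well.
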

\proof
  Abbreviating $\kappa:=\frac{\xis}{R}$ and writing $\xi=\kappa s^\frac{1}{n}$ for $s>0$, we use (\ref{1.1}) to compute
  \bas
	\Theta_s(s)=\frac{\kappa}{n} s^{\frac{1}{n}-\frac{3}{2}} J_\frac{n}{2}'(\xi) - \frac{1}{2} s^{-\frac{3}{2}} J_\frac{n}{2}(\xi),
	\qquad s>0,
  \eas
  and
  \bas
	\Theta_{ss}(s)
	= \frac{\kappa^2}{n^2} s^{\frac{2}{n}-\frac{5}{2}} J_\frac{n}{2}''(\xi)
	+ \frac{\kappa}{n}\Big( \frac{1}{n}-2\Big) s^{\frac{1}{n}-\frac{5}{2}} J_\frac{n}{2}'(\xi)
	+ \frac{3}{4} s^{-\frac{5}{2}} J_\frac{n}{2}(\xi),
	\qquad s>0.
  \eas
  Due to the well-known fact that
  $\xi^2 J_\frac{n}{2}''(\xi) + \xi J_\frac{n}{2}'(\xi) + (\xi^2-\frac{n^2}{4}) J_\frac{n}{2}(\xi)=0$ for all $\xi>0$,
  this implies that
  \bas
	s\Theta_{ss}(s) + 2\Theta_s(s)
	&=& \frac{\kappa^2}{n^2} s^{\frac{2}{n}-\frac{3}{2}} J_\frac{n}{2}''(\xi)
	+ \bigg\{ \frac{\kappa}{n}\Big(\frac{1}{n}-2\Big) + 2\cdot\frac{\kappa}{n}\bigg\} \cdot s^{\frac{1}{n}-\frac{3}{2}}
		J_\frac{n}{2}'(\xi)
	+ \Big\{ \frac{3}{4} - 2\cdot\frac{1}{2}\Big\} \cdot s^{-\frac{3}{2}} J_\frac{n}{2}(\xi) \\
	&=& \frac{1}{n^2} \cdot s^{-\frac{3}{2}} \cdot \Big\{
	\xi^2 J_\frac{n}{2}''(\xi) + \xi J_\frac{n}{2}'(\xi) - \frac{n^2}{4} J_\frac{n}{2}(\xi)\Big\} \\
	&=& \frac{1}{n^2} \cdot s^{-\frac{3}{2}} \cdot \Big\{ -\xi^2 J_\frac{n}{2}(\xi)\Big\},
	\qquad s>0, \ \xi=\kappa s^\frac{1}{n},
  \eas
  from which (\ref{1.2}) immediately follows by definition of $\kappa$.\abs
  To verify (\ref{1.3}), we only need to note that due to the series representation of $J_\frac{n}{2}$,
  \be{1.6}
	\Theta(s)= \sum_{k=0}^\infty a_k s^\frac{2k}{n}
	\qquad \mbox{for all } s>0,
  \ee
  where $a_k:=\frac{(-1)^k}{k! \Gamma(k+\frac{n}{2}+1)} \cdot (\frac{\kappa}{2})^{2k+\frac{n}{2}}$ for $k\ge 0$ and hence
  $a_0=\frac{1}{\Gamma(\frac{n}{2}+1)} \cdot (\frac{\kappa}{2})^\frac{n}{2}$.\abs
  Likewise, (\ref{1.5}) follows from the fact that, as a consequence of (\ref{1.6}),
  \bas
	s^{-\frac{2}{n}} \cdot \big( s\Theta_{s}(s)\big)
	= \sum_{k=1}^\infty \frac{2k}{n} a_k s^\frac{2(k-1)}{n}
	\to \frac{2a_1}{n}
	\qquad \mbox{as } s\searrow 0,
  \eas
  while (\ref{1.4}) can be concluded from the observation that in view of (\ref{1.2}), $\Theta$ cannot attain a local minimum
  at any $s_0>0$ with $\Theta(s_0)>0$.
\qed
Using these functions as spatial profiles of comparison functions with separated structure, we can thereby
assert asymptotic stability of homogeneous equilibria at subcritical mass levels in the following style,
yet acting at the stage of the transformed variables $w$ and $z$:
\begin{lem}\label{lem2}
  Let $n\ge 2$ and $R>0$, and with $\Theta$ and $\xis$ taken from Lemma \ref{lem1}, suppose that $m>0$ is such that (\ref{5.3})
  holds.
  Then there exist $\eps_0=\eps_0(m,R)>0$ and $\alpha_0=\alpha_0(m,R)>0$ with the property that whenever $u_0$ is such that
  (\ref{init}) with $\int_{B_R} u_0=m$, and that the function $w_0$ in (\ref{w0}) satisfies
  \be{2.2}
	\Big| w_0(s) - \frac{\mu s}{n}\Big| \le \eps_0 s \Theta(s)
	\qquad \mbox{for all } s\in (0,R^n),
  \ee
  we have
  \be{2.3}
	\Big| w(s,t)-\frac{\mu s}{n}\Big| \le \eps_0 s \Theta(s) e^{-\alpha_0 t}
	\qquad \mbox{for all $s\in (0,R^n)$ and } t\in (0,\tm),
  \ee
  with $\mu=\frac{nm}{\omega_n R^n}$.
\end{lem}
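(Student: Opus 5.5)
The plan is to run a comparison argument for the scalar problem (\ref{0z}), using the separated functions
\bas
	\oz(s,t):=\eps_0 s\Theta(s) e^{-\alpha_0 t}
	\qquad \mbox{and} \qquad
	\uz(s,t):=-\oz(s,t),
	\qquad s\in [0,R^n], \ t\ge 0,
\eas
as super- and subsolutions. Assumption (\ref{2.2}) says precisely that $\uz(\cdot,0)\le z(\cdot,0)\le \oz(\cdot,0)$. Moreover, $\Theta(R^n)=s^{-\frac{1}{2}}J_\frac{n}{2}(\xis)|_{s=R^n}=0$, and $s\Theta(s)\to 0$ as $s\searrow 0$ by (\ref{1.3}). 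Hence $\oz$ and $\uz$ vanish at $s=0$ and $s=R^n$, exactly as $z$ does. Also $\Theta>0$ on $(0,R^n)$, since $\xis$ is the first zero of $J_\frac{n}{2}$.

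\emph{Computing the parabolic defect.} By (\ref{1.2}), $\oz_{ss}=\eps_0 e^{-\alpha_0 t}(s\Theta_{ss}+2\Theta_s)=-\frac{\xis^2}{n^2R^2}s^{\frac{2}{n}-1}\oz/s$. Hence $n^2s^{2-\frac{2}{n}}\oz_{ss}=-\frac{\xis^2}{R^2}\oz$, and therefore
\bas
	\oz_t-n^2s^{2-\frac{2}{n}}\oz_{ss}-n\oz\oz_s-\mu\oz
	=\oz\cdot\Big\{\frac{\xis^2}{R^2}-\mu-\alpha_0-n\oz_s\Big\}.
\eas
Since $\mu=\frac{nm}{\omega_nR^n}$, condition (\ref{5.3}) is equivalent to $\delta:=\frac{\xis^2}{R^2}-\mu>0$. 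I set $\alpha_0:=\frac{\delta}{2}$. Next, $\oz_s=\eps_0e^{-\alpha_0 t}(\Theta+s\Theta_s)$, and by (\ref{1.3}) and (\ref{1.5}) the function $\Theta+s\Theta_s$ is bounded on $(0,R^n]$, say by $K$. Choosing $\eps_0:=\frac{\delta}{4nK}$ makes the brace at least $\frac{\delta}{4}>0$, so $\oz$ is a supersolution. Since the nonlinearity $zz_s$ is odd under $z\mapsto -z$, the same computation gives, for $\uz=-\oz$,
\bas
	\uz_t-n^2s^{2-\frac{2}{n}}\uz_{ss}-n\uz\uz_s-\mu\uz=-\oz\cdot\Big\{\delta-\alpha_0+n\oz_s\Big\}\le 0,
\eas
so $\uz$ is a subsolution.

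\emph{Comparison.} I would prove a comparison lemma by hand instead of citing one, because the diffusion degenerates at $s=0$. Fix $T\in(0,\tm)$ and set $d:=z-\oz$. On $[0,R^n]\times[0,T]$ the function $d$ is continuous, by the continuity of $w$ from Lemma \ref{lem_loc} and (\ref{w}). It vanishes at $s=0$ and $s=R^n$, satisfies $d(\cdot,0)\le 0$, and is $C^{2,1}$ in $(0,R^n)\times(0,T]$. Writing $zz_s-\oz\oz_s=zd_s+\oz_sd$, we obtain
\bas
	d_t\le n^2s^{2-\frac{2}{n}}d_{ss}+nzd_s+(n\oz_s+\mu)d,
\eas
where the coefficient $n\oz_s+\mu$ is bounded, say by $L$. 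Consider $\tilde d:=e^{-(L+1)t}d$ and suppose $\max_{[0,R^n]\times[0,T]}\tilde d>0$. This positive maximum must then be attained at a point with $s\in(0,R^n)$ and $t>0$. There $\tilde d_s=0$, $\tilde d_{ss}\le 0$ and $\tilde d_t\ge 0$, whereas the inequality above gives $\tilde d_t\le -\tilde d<0$, a contradiction. Hence $z\le\oz$, and $z\ge\uz$ follows in the same way, which yields (\ref{2.3}).

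I expect the main obstacle to be this step, namely making sure the maximum principle is legitimate despite the degeneracy at $s=0$ and the only-continuous behavior at $t=0$. I would handle this exactly as above: I work with a maximum on the closed rectangle, which exists by continuity, and use that the boundary values are pinned to zero. In particular I never need regularity of $z$ at $s=0$.
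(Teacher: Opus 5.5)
Your proposal is correct and follows the paper's proof: both use the comparison functions $\pm\eps_0 e^{-\alpha_0 t}s\Theta(s)$ with the same defect computation via (\ref{1.2}). The differences are minor. You pick $\eps_0$ from a two-sided bound on $\Theta+s\Theta_s$, whereas the paper uses one-sided bounds: $\Theta\le c_1$ with $\Theta_s\le 0$ for $\oz$, and $s\Theta_s\ge -c_2$ with $\Theta\ge 0$ for $\uz$. You also prove the comparison principle by hand where the paper cites a lemma for degenerate diffusion; your argument is valid because the Dirichlet data force any positive maximum of $e^{-(L+1)t}(z-\oz)$ into the interior. One wording slip: $zz_s$ is even, not odd, under $z\mapsto -z$ (which is why the sign of $n\oz_s$ flips for $\uz$), though your displayed formula for $\uz$ is right.
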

\proof
  Given $R>0$ and $m>0$ fulfilling (\ref{5.3}), by definition of $\mu$ we have
  \bas
	\mu < \frac{n}{\omega_n R^n} \cdot \frac{\omega_n R^{n-2}}{n} \cdot \xis^2
	= \frac{\xis^2}{R^2},
  \eas
  so that it is possible to fix $\alpha_0=\alpha_0(m,R)>0$ small enough such that
  \be{2.4}
	\alpha_0 < \frac{\xis^2}{R^2}-\mu.
  \ee
  We next combine (\ref{1.3}) with (\ref{1.4}) to see that if we let
  $c_1=c_1(R):=\frac{\xis^\frac{n}{2}}{(2R)^\frac{n}{2} \Gamma(\frac{n+2}{2})}$, then
  \be{2.5}
	\Theta(s) \le c_1
	\qquad \mbox{for all } s\in (0,R^n),
  \ee
  while using (\ref{1.5}) we can pick $c_2=c_2(R)>0$ such that
  \be{2.6}
	s\Theta_s(s) \ge -c_2
	\qquad \mbox{for all } s\in (0,R^n).
  \ee
  Relying on (\ref{2.4}), we can thereupon choose $\eps_0=\eps_0(m,R)>0$ suitably small such that
  \be{2.8}
	n\eps_0\cdot\max \{c_1 \, , \, c_2\} \le \frac{\xis^2}{R^2}-\alpha_0-\mu.
  \ee
  We henceforth suppose that (\ref{init}) is valid with $\int_{B_R} u_0=m$, and that the corespondingly defined function $w_0$
  from (\ref{w0}) satisfies (\ref{2.2}). To see that then (\ref{2.3}) must hold, we let
  \be{oz}
	\oz(s,t):=\left\{ \begin{array}{ll}
	y(t) s\Theta(s),
	\qquad & s\in (0,R^n], \ t\ge 0, \\[1mm]
	0,
	\qquad & s=0, \ t\ge 0,
	\end{array} \right.
  \ee
  as well as
  \be{uz}
	\uz(s,t) := -\oz(s,t),
	\qquad s\in [0,R^n], \ t\ge 0,
  \ee
  where
  \be{2.9}
	y(t):=\eps_0 e^{-\alpha_0 t},
	\qquad t\ge 0.
  \ee
  Then according to Lemma \ref{lem1}, and particularly due to (\ref{1.3}), both $\oz$ and $\uz$ belong to
  $C^0([0,R^n]\times [0,\infty)) \cap C^{2,1}((0,R^n)\times [0,\infty))$ and satisfy
  \be{2.10}
	\oz(s,0)
	= \eps_0 s\Theta(s)
	\ge w_0(s) -\frac{\mu s}{n}
	\ge -\eps_0 s\Theta(s)
	= \uz(s,0)
	\qquad \mbox{for all } s\in (0,R^n)
  \ee
  as well as
  \bea{2.11}
	& & \hs{-20mm}
	\oz(0,t) = \uz(0,t) = z(0,t)=0
	\quad \mbox{and} \quad
	\oz(R^n,t)=\uz(R^n,t)=z(R^n,t)=w(R^n,t)-\frac{\mu R^n}{n}=0 \nn\\[2mm]
	& & \hs{90mm}
	\qquad \mbox{for all } t\in (0,\tm).
  \eea
  Apart from that, on the basis of (\ref{1.2}) and the identities $y'=-\alpha_0 y$,
  $(s\Theta)_s = s\Theta_s + \Theta$ and $(s\Theta)_{ss}=s\Theta_{ss} + 2\Theta_s$ we compute
  \bas
	& & \hs{-15mm}
	\oz_t- n^2 s^{2-\frac{2}{n}} \oz_{ss} - n\oz\oz_s - \mu \oz \\
	&=& y'(t) s\Theta(s)
	- n^2 s^{2-\frac{2}{n}} \cdot y(t) \cdot \Big\{ s\Theta_{ss}(s)+2\Theta_s(s)\Big\} \\
	& & - n\cdot y(t) s\Theta(s) \cdot y(t) \cdot \Big\{ s\Theta_s(s)+\Theta(s)\Big\}
	-\mu \cdot y(t) s\Theta(s) \\
	&=& - \alpha_0 y(t) s\Theta(s)
	+ \frac{\xis^2}{R^2} y(t) s\Theta(s)
	- ny^2(t) s\Theta(s) \cdot \Big\{ s\Theta_s(s)+\Theta(s)\Big\}
	- \mu y(t) s\Theta(s) \\
	&=& y(t)s\Theta(s) \cdot
	\Big\{ \frac{\xis^2}{R^2} - \alpha_0-\mu -ny(t) s\Theta_s(s) -ny(t)\Theta(s)\Big\}
	\qquad \mbox{for all $s\in (0,R^n)$ and } t>0.
  \eas
  Since $\Theta_s(s)\le 0$ for all $s\in (0,R^n)$ by (\ref{1.4}), in view of (\ref{2.5}) and (\ref{2.9}) this shows that
  \bea{2.12}
	\oz_t- n^2 s^{2-\frac{2}{n}} \oz_{ss} - n\oz\oz_s - \mu \oz
	&\ge& y(t) s\Theta(s) \cdot
	\Big\{ \frac{\xis^2}{R^2} - \alpha_0-\mu -n\eps_0 c_1\Big\} \nn\\[2mm]
	&\ge& 0
	\qquad \mbox{for all $s\in (0,R^n)$ and } t>0,
  \eea
  as a consequence of the first among the restrictions on $\eps_0$ contained in (\ref{2.8}).\abs
  In quite a similar manner, it can be seen that the function $\uz$ satisfies
  \bas
	& & \hs{-15mm}
	\uz_t- n^2 s^{2-\frac{2}{n}} \uz_{ss} - n\uz\uz_s - \mu \uz \\
	&=& - y(t) s\Theta(s) \cdot
	\Big\{ \frac{\xis^2}{R^2} - \alpha_0-\mu + ny(t)s\Theta_s(s) + ny(t)\Theta(s)\Big\}
	\qquad \mbox{for all $s\in (0,R^n)$ and } t>0,
  \eas
  whence using (\ref{2.6}) along with the nonnegativity of $\Theta$, we may draw on the seond requirement in (\ref{2.8}) to find that
  \bea{2.13}
	\uz_t- n^2 s^{2-\frac{2}{n}} \uz_{ss} - n\uz\uz_s - \mu \uz
	&\le& - y(t) s\Theta(s) \cdot
	\Big\{ \frac{\xis^2}{R^2} - \alpha_0-\mu -n\eps_0 c_2\Big\} \nn\\[2mm]
	&\le& 0
	\qquad \mbox{for all $s\in (0,R^n)$ and } t>0.
  \eea
  Now combining (\ref{2.10}) and (\ref{2.11}) with (\ref{2.12}) and (\ref{2.13}), by means of a comparison principle covering
  the present case of degenerate diffusion (cf., e.g., \cite[Lemma 2.3]{win_ct_growup_algebraic})
  we conclude that thanks to (\ref{0z})
  we have
  \bas
	\uz(s,t)\le z(s,t)\le \oz(s,t)
	\qquad \mbox{for all $s\in (0,R^n)$ and } t\in (0,\tm),
  \eas
  which in light of the definitions of $\uz, \oz$ and $z$ precisely yields (\ref{2.3}).
\qed
An application of Lemma \ref{lem79} enables us to convert this into a stability feature
of the derivative $w_s$, and hence of the original variable $u$.
\begin{lem}\label{lem3}
  Let $n\ge 2$, $R>0$ and $m>0$ be such that (\ref{5.3}) holds.
  Then there exist $\eps=\eps(m,R)>0$ and $\alpha=\alpha(m,R)>0$ such that if $u_0$ satisfies (\ref{init}) with $\int_{B_R} u_0=m$
  and
  \be{3.1}
	\|u_0-\mu\|_{L^\infty(B_R)}\le \eps
  \ee
  with $\mu=\frac{nm}{\omega_n R^n}$,
  then $\tm=\infty$, and one can find $C=C(u_0)>0$ such that
  \be{3.2}
	\|u(\cdot,t)-\mu\|_{L^\infty(B_R)} \le C e^{-\alpha t}
	\qquad \mbox{for all } t>0.
  \ee
\end{lem}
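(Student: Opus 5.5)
The plan is to reduce the $L^\infty$ condition (\ref{3.1}) to the cone condition (\ref{2.2}) of Lemma \ref{lem2}. That yields exponential decay of $z=w-\frac{\mu s}{n}$ with respect to the weight $s\Theta(s)$. From this I get global existence via Lemma \ref{lem79}, and then exponential decay of $u-\mu$ itself by interpolating between the decay of $z$ and a uniform Hölder bound on $u$.

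\textbf{Step 1: choice of $\eps$.} With $\eps_0,\alpha_0$ from Lemma \ref{lem2}, I note that $|w_0(s)-\frac{\mu s}{n}| = |\int_0^{s^{1/n}}\rho^{n-1}(u_0-\mu)\,d\rho| \le \frac{\|u_0-\mu\|_{L^\infty}}{n}\, s$ for all $s\in(0,R^n)$. For $s\in(0,R^n]$ we have $\frac{\xis s^{1/n}}{R}\le\xis$, and $J_{\frac n2}>0$ on $(0,\xis)$. Together with (\ref{1.4}) this shows that $\Theta$ is positive and decreasing on $(0,R^n)$, and that $\Theta(s)\ge \Theta(R^n)$ there. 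I therefore have to handle the endpoint $s=R^n$, where $J_{\frac n2}(\xis)=0$ forces $\Theta(R^n)=0$. This means (\ref{2.2}) cannot follow from a plain $L^\infty$ bound near $s=R^n$: the pointwise bound $\eps s/n$ does not vanish there.

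I would fix this using the mass constraint. Since $z_0(R^n)=0$, we have $|z_0(s)|\le \frac{\eps}{n}(R^n-s)$ as well. Because $\Theta$ has a simple zero at $R^n$ (the Bessel zero is simple), $s\Theta(s)\ge c(R^n-s)$ near $R^n$ for some $c>0$. Combining both bounds, $|z_0(s)|\le \frac{\eps}{n}\min\{s,R^n-s\}\le c_3\eps\, s\Theta(s)$ on $(0,R^n)$, where $c_3$ depends only on $n,R$. I then choose $\eps:=\eps_0/c_3$, so that (\ref{2.2}) holds.

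\textbf{Step 2: global existence.} Lemma \ref{lem2} gives $|z(s,t)|\le \eps_0 c_1 s e^{-\alpha_0 t}$ by (\ref{2.5}). Hence $w/s\le \frac{\mu}{n}+\eps_0c_1$, so (\ref{79.1}) holds. Lemma \ref{lem79} then gives $w_s\le C$, that is, $u$ is bounded on $B_R\times(0,\tm)$, and (\ref{ext}) forces $\tm=\infty$.

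\textbf{Step 3: exponential decay of $u-\mu$ (main obstacle).} Since $u$ is bounded, $\nabla v$ is bounded by elliptic regularity. Standard parabolic Hölder estimates for $u_t=\Delta u-\nabla\cdot(u\nabla v)$ then give some $\theta\in(0,1)$ and $K>0$ with $\|u(\cdot,t)\|_{C^\theta(\bom)}\le K$ for all $t\ge1$. Because $s\mapsto s^{1/n}$ is $\frac1n$-Hölder, $z_s(s,t)=\frac1n(u(s^{1/n},t)-\mu)$ is $\frac{\theta}{n}$-Hölder in $s$, with a constant $K'$ independent of $t\ge1$.

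For fixed $s_0\in[0,R^n]$ and $h\in(0,R^n/2)$, I pick an interval $[s_0,s_0\pm h]\subset[0,R^n]$. The mean value theorem plus the Hölder bound then give
\[ |z_s(s_0,t)|\le \frac{2\eps_0c_1R^n e^{-\alpha_0t}}{h}+K'h^{\theta/n}. \]
Choosing $h=\frac{R^n}{2}e^{-\beta t}$ with $\beta=\frac{\alpha_0 n}{n+\theta}$ yields $|u-\mu|\le Ce^{-\alpha t}$ for $t\ge1$ with $\alpha:=\frac{\alpha_0\theta}{n+\theta}$. For $t\in(0,1)$ the bound holds trivially by boundedness of $u$, after enlarging $C$.

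The delicate points are the uniform-in-time Hölder estimate, for which I would cite standard results for parabolic equations with bounded drift, and the boundary issue at $s=R^n$ in Step 1. Note that $\theta$, and hence $\alpha$, might depend on $u_0$ through $\sup u$, whereas the statement asks for $\alpha=\alpha(m,R)$. To get that, I would bound $\sup u$ uniformly over the admissible data, which I expect follows from the explicit bounds in Lemma \ref{lem79}.
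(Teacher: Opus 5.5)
Your Steps 1 and 2 match the paper's argument. The paper also combines $|z_0(s)|\le\frac{\eps}{n}s$ with the mass-based bound $|z_0(s)|\le\frac{\eps}{n}(R^n-s)$ and the linear vanishing $\Theta(s)\ge c_1(R^n-s)$ (obtained from (\ref{1.4})) to verify (\ref{2.2}), and then gets $\tm=\infty$ from Lemma \ref{lem79}.

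Step 3 is a different, and more elementary, route to the decay of $u-\mu$. The paper proceeds in three moves:
\begin{itemize}
\item it derives $\|\nabla u(\cdot,t)\|_{L^\infty(B_R)}\le c_4$ for $t>1$, using elliptic regularity, maximal Sobolev regularity and an embedding;
\item it converts this into $\|z_{ss}(\cdot,t)\|_{L^q((0,R^n))}\le c_5$ for $q<\frac{n}{n-1}$, where the singular factor $s^{\frac1n-1}$ in $z_{ss}$ is what limits $q$;
\item it interpolates by Gagliardo--Nirenberg, $\|z_s\|_{L^\infty}\le c\|z_{ss}\|_{L^q}^{\gamma}\|z\|_{L^\infty}^{1-\gamma}$ with $\gamma=\frac{q}{2q-1}$, which gives $\alpha=\alpha_0\frac{q-1}{2q-1}$.
\end{itemize}
You instead use only a H\"older bound on $u$, transfer it to $z_s$ via $|s_1^{1/n}-s_2^{1/n}|\le|s_1-s_2|^{1/n}$, and interpolate by hand with the mean value theorem. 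This is correct.

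The two interpolations are in fact equivalent. With a $\sigma$-H\"older bound on $z_s$, your argument gives the rate $\alpha_0\frac{\sigma}{1+\sigma}$. Since $z_{ss}\in L^q$ amounts to $z_s\in C^{1-1/q}$, taking $\sigma=1-\frac1q$ reproduces the paper's rate $\alpha_0\frac{q-1}{2q-1}$ exactly.

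On your concern that $\alpha$ must depend only on $(m,R)$: you do not need a data-independent bound on $\sup u$. Lemma \ref{lem79} as stated says nothing about how its constant depends on $u_0$, so the fix you suggest cannot be drawn from the paper anyway. The real point is that the H\"older exponent should not come from De Giorgi--Nash--Moser, where it depends on $\|\nabla v\|_{L^\infty}$ and hence on $u_0$. Once $u$ is bounded, bootstrap through $v\in W^{2,p}$ for all $p<\infty$ and maximal regularity. This bounds $u(\cdot,t)$ in $W^{1,\infty}(B_R)$ for $t\ge1$, exactly as in the paper, with only the constant depending on $u_0$. So you may fix $\theta=1$, and your argument then gives $\alpha=\frac{\alpha_0}{n+1}$, which depends only on $n$, $m$ and $R$.
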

\proof
  We let $\eps_=\eps_0(m,R)>0$ and $\alpha_0=\alpha_0(m,R)>0$ be as provided by Lemma \ref{lem2}, and rely on (\ref{1.4}) to fix
  $c_1=c_1(R)>0$ fulfilling
  \be{3.3}
	\Theta(s)\ge c_1\cdot (R^n-s)
	\qquad \mbox{for all } s\in (0,R^n).
  \ee
  We furthermore pick any $q\in (1,\frac{n}{n-1})$ and set $\gamma:=\frac{q}{2q-1}$ as well as
  \be{3.4}
	\eps\equiv \eps(m,R):=\frac{nc_1 R^n \eps_0}{2}
	\qquad \mbox{and} \qquad
	\alpha\equiv \alpha(m,R):=\alpha_0\cdot (1-\gamma),
  \ee
  noting that indeed $\alpha$ is positive, since $q>1$ implies that $\gamma<1$.\abs
  Now assuming $u_0$ to comply with (\ref{init}) and to satisfy $\int_{B_R} u_0=m$ as well as (\ref{3.1}),
  we first observe that then the corrsepondingly defined functio n $w_0$ in (\ref{w0}) has the property that
  \bas
	\Big| w_0(s)-\frac{\mu s}{n} \Big|
	&=& \bigg| \int_0^{s^\frac{1}{n}} r^{n-1} u_0(r) dr - \frac{\mu s}{n} \bigg| \\
	&=& \bigg| \int_0^{s^\frac{1}{n}} r^{n-1} \cdot \Big( u_0(r)-\mu\Big) dr \bigg| \\
	&\le& \eps \int_0^{s^\frac{1}{n}} r^{n-1} dr \\
	&=& \frac{\eps}{n} \cdot s
	\qquad \mbox{for all } s\in (0,R^n),
  \eas
  so that since $\Theta(s)\ge \frac{c_1 R^n}{2}$ for all $s\in (0,\frac{R^n}{2}]$ by (\ref{3.3}), we obtain that
  \be{3.5}
	\Big| w_0(s)-\frac{\mu s}{n} \Big|
	\le \frac{\eps}{n} \cdot \frac{2}{c_1 R^n} \cdot s\Theta(s)
	= \eps_0 s\Theta(s)
	\qquad \mbox{for all } s\in \Big(0,\frac{R^n}{2}\Big]
  \ee
  according to (\ref{3.4}).
  For larger $s$, we may use that $w_0(R^n)=\frac{\mu R^n}{n}$ to similarly deive from (\ref{3.3}) and (\ref{3.4}) that
  \bas
	\Big| w_0(s)-\frac{\mu s}{n} \Big|
	&=& \bigg| \bigg\{ w_0(R^n)-\int_{s^\frac{1}{n}}^R r^{n-1} u_0(r) dr \bigg\} - \frac{\mu s}{n}\bigg| \\
	&=& \bigg| \frac{\mu \cdot (R^n-s)}{n} - \int_{s^\frac{1}{n}}^R r^{n-1} \cdot \Big(u_0(r)-\mu+\mu\Big)dr \bigg| \\
	&=& \bigg| - \int_{s^\frac{1}{n}}^R r^{n-1} \cdot \Big(u_0(r)-\mu\Big) dr \bigg| \\
	&\le& \eps \int_{s^\frac{1}{n}}^R r^{n-1} dr \\
	&=& \frac{\eps\cdot (R^n-s)}{n} \\
	&\le& \frac{\eps}{nc_1} \cdot \Theta(s) \\
	&=& \eps_0 \cdot \frac{R^n}{2} \Theta(s) \\
	&\le& \eps_0 s\Theta(s)
	\qquad \mbox{for all } s\in \Big(\frac{R^n}{2},R^n\Big).
  \eas
  Together with (\ref{3.5}), this shows that (\ref{2.2}) holds, so that Lemma \ref{lem2} becomes applicable so as to warrant that due
  to our definition of $\alpha$, the function $w$ from (\ref{w}) satisfies
  \be{3.6}
	\Big| w(s,t)-\frac{\mu s}{n}\Big| \le \eps_0 s \Theta(s) e^{-\alpha t}
	\qquad \mbox{for all $s\in (0,R^n)$ and each } t\in (0,\tm).
  \ee
  As a first consequence, this particularly ensures that
  \bas
	\frac{w(s,t)}{s} \le \frac{\mu}{n} + \eps \|\Theta\|_{L^\infty((0,R^n))}
	\qquad \mbox{for all $s\in (0,R^n)$ and } t\in (0,\tm),
  \eas
  whence the outcome of Lemma \ref{lem79} when combined with Lemma \ref{lem_loc} asserts that $\tm=\infty$, and that there exists
  $c_2=c_2(u_0)>0$ satisfying
  \be{3.66}
	\|u(\cdot,t)\|_{L^\infty(B_R)} \le c_2
	\qquad \mbox{for all } t>0.
  \ee
  Through standard elliptic regularity theory, applied to the second equatio in (\ref{0}), for each $p>1$ this implies the existence
  of $c_3(p)=c_3(p;u_0)>0$ such that
  \be{3.7}
	\|v(\cdot,t)\|_{W^{2,p}(B_R)} \le c_3(p)
	\qquad \mbox{for all } t>0,
  \ee
  whereupon recalling maximal Sobolev regularity theory for the inhomogeneous linear heat equation
  $u_t=\Delta u - \nabla v\cdot\nabla u - \Delta v\cdot u$ (\cite{giga_sohr}) shows that in view of a known property of
  embedding into $C^{1+\beta,\beta}$ spaces (\cite{amann_emb}), employing (\ref{3.7}) for suitably large $p$ yields
  $c_4=c_4(u_0)>0$ fulfilling
  \bas
	\|\nabla u(\cdot,t)\|_{L^\infty(B_R)} \le c_4
	\qquad \mbox{for all } t>1.
  \eas
  As, by definition of $w$ and $z$,
  \bas
	z_{ss}(s,t)=w_{ss}(s,t)=\frac{1}{n^2} s^{\frac{1}{n}-1} u_r( s^\frac{1}{n},t)
	\qquad \mbox{for all $s\in (0,R^n)$ and } t>0,
  \eas
  the latter means that
  \bea{3.8}
	\|z_{ss}(\cdot,t)\|_{L^q((0,R^n))}
	&=& \frac{1}{n^{2q}} \int_0^{R^n} s^{(\frac{1}{n}-1) q} \big| u_r(s^\frac{1}{n},t)\big|^q ds \nn\\[2mm]
	&\le& c_5
	\qquad \mbox{for all } t>1,
  \eea
  with $c_5\equiv c_5(u_0):=\frac{c_4}{n^{2q}} \int_0^{R^n} s^{(\frac{1}{n}-1)q} ds$ being finite since $q<\frac{n}{n-1}$.\abs
  We now once more go back to (\ref{3.6}), which when restated in terms of the variable $z$ states that
  \be{3.9}
	\|z(\cdot,t)\|_{L^\infty((0,R^n))} \le c_6 e^{-\alpha_0 t}
	\qquad \mbox{for all } t>0
  \ee
  with $c_6\equiv c_6(m,R):=\eps_0 R^n \|\Theta\|_{L^\infty((0,R^n))}$. Thus, if in accordance with a Gagliardo-Nirenberg inequality
  we take $c_7=c_7(R)>0$ in such a way that
  \bas
	\|\varphi_s\|_{L^\infty((0,R^n))} \le c_7\|\varphi_{ss}\|_{L^q((0,R^n))}^\gamma \|\varphi\|_{L^\infty((0,R^n))}^{1-\gamma}
	\quad \mbox{for all $\varphi\in W^{2,q}((0,R^n))$ fulfilling } \varphi(0)=\varphi(R^n)=0,
  \eas
  then a combination of (\ref{3.8}) with (\ref{3.9}) reveals that
  \be{3.10}
	\|z_s(\cdot,t)\|_{L^\infty((0,R^n))}
	\le c_7\cdot c_5^\gamma \cdot c_6^{1-\gamma} e^{\alpha_0 (1-\gamma) t}
	\qquad \mbox{for all } t>1.
  \ee
  It therefore only remains to note that $nz_s(s,t)=u(s^\frac{1}{n},t)-\mu$ for all $s\in (0,R^n)$ and $t>0$,
  to conclude (\ref{3.2}) from (\ref{3.10}) and (\ref{3.66}) for some suitably large $C=C(u_0)>0$ and $\alpha$ as determined in
  (\ref{3.4}).
\qed
We have thereby accomplished the first of our objectives:\abs
\proofc of Theorem \ref{theo5} i). \quad
  This statement immediately results from Lemma \ref{lem3}.
\qed
\mysection{Blow-up detection via refined moment analysis}\label{sect4}
Once more on the basis of (\ref{0z}),
and again relying on essential properties of the function $\Theta$ from Lemma \ref{lem1},
we can finally make sure that under an assumption on the mass level which is essentially complementary to that
from Theorem \ref{theo5} i),
in the presence of any nontrivial nonnegative initial data in (\ref{0z}),
and hence for arbitrarily small concentration-increasing perturbations of homogeneity,
a suitably designed moment-like functional cannot exist globally:
\begin{lem}\label{lem4}
  Let $n\ge 2$ and $R>0$, and assume that $u_0$ satisfies (\ref{init}) and is such that with $m:=\int_{B_R} u_0$,
  $\mu:=\frac{nm}{\omega_n R^n}$ and $w_0$ and $\xis$ as in (\ref{w0}) and Lemma \ref{lem1} we have (\ref{5.1})
  as well as
  \be{4.2}
	w_0(s) \ge \frac{\mu s}{n}
	\quad \mbox{for all } s\in (0,R^n)
	\qquad \mbox{but} \qquad
	w_0-\frac{\mu (\cdot)}{n} \not\equiv 0.
  \ee
  Then $\tm<\infty$.
\end{lem}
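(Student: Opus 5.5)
The plan is a Kaplan-type eigenfunction moment argument for (\ref{0z}), with a weight built from $\Theta$ chosen so that the linear part of (\ref{0z}) becomes an exact eigenvalue relation. I would set
\[
	\psi(s):=s^{\frac{2}{n}-1}\Theta(s), \qquad s\in (0,R^n),
	\qquad\qquad
	\phi(t):=\int_0^{R^n} z(s,t)\psi(s)\,ds, \qquad t\in [0,\tm).
\]
The reason for this choice is that $s^{2-\frac{2}{n}}\psi=s\Theta$. By (\ref{1.2}),
\[
	(s\Theta)_{ss}=s\Theta_{ss}+2\Theta_s=-\frac{\xis^2}{n^2R^2}\,s^{\frac{2}{n}-1}\Theta=-\frac{\xis^2}{n^2R^2}\,\psi .
\]
Multiplying (\ref{0z}) by $\psi$ and integrating by parts twice in the diffusion term should therefore give
\[
	\phi'(t)=\Big(\mu-\frac{\xis^2}{R^2}\Big)\phi(t)-\frac{n}{2}\int_0^{R^n} z^2\psi_s\,ds .
\]
This uses the identity $n\int zz_s\psi=-\frac n2\int z^2\psi_s$.

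The boundary terms have to vanish, and I would check this first. At $s=R^n$ we have $z=0$, and also $\Theta(R^n)=R^{-n/2}J_{\frac n2}(\xis)=0$, so both $z_s\,s\Theta$ and $z\,(s\Theta)_s$ vanish there. At $s=0$ we have $z=0$ and $s\Theta\to0$ by (\ref{1.3}), while $z_s=\frac1n u-\frac{\mu}{n}$ is bounded for $t>0$; moreover $(s\Theta)_s$ stays bounded by (\ref{1.3}) and (\ref{1.5}). The term $z^2\psi$ also vanishes at both ends.

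Next I would check the signs. Since $\Theta>0$ on $(0,R^n)$, $\Theta_s<0$ by (\ref{1.4}), and $\frac2n-1\le 0$, the weight satisfies $\psi>0$ and $\psi_s<0$ on $(0,R^n)$. Condition (\ref{5.1}) means exactly that $\mu>\frac{\xis^2}{R^2}$, by the computation at the start of the proof of Lemma \ref{lem2}. Hence both terms on the right are nonnegative whenever $\phi\ge0$.

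To obtain a superlinear lower bound I would use Cauchy--Schwarz:
\[
	\phi^2\le \int_0^{R^n} z^2|\psi_s|\,ds\cdot\int_0^{R^n}\frac{\psi^2}{|\psi_s|}\,ds .
\]
It must then be verified that $c:=\int_0^{R^n}\psi^2/|\psi_s|$ is finite. Near $R^n$ this holds because $\Theta_s(R^n)<0$. Near $0$, (\ref{1.6}) gives $\psi\sim a_0s^{\frac2n-1}$ and $|\psi_s|\sim(1-\frac2n)a_0s^{\frac2n-2}$ for $n\ge3$, so $\psi^2/|\psi_s|\sim s^{2/n}$. For $n=2$ we have $\psi=\Theta$ and $\Theta_s\to a_1<0$, so the integrand is bounded. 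This yields
\[
	\phi'\ge\Big(\mu-\frac{\xis^2}{R^2}\Big)\phi+\frac{n}{2c}\phi^2\ge\frac{n}{2c}\phi^2 \qquad \mbox{on } (0,\tm),
\]
valid as long as $\phi\ge0$.

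For the initial value, (\ref{4.2}) says $z(\cdot,0)\ge0$ with $z(\cdot,0)\not\equiv0$, and $z(\cdot,0)$ is continuous while $\psi>0$, so $\phi(0)>0$. Since $\phi'\ge0$ while $\phi\ge0$, $\phi$ stays positive, and the Riccati inequality forces $\phi$ to blow up before time $\frac{2c}{n\phi(0)}$. On the other hand, $|z|\le\frac{m}{\omega_n}+\frac{\mu R^n}{n}$ because $0\le w\le\frac{m}{\omega_n}$, and $\psi\in L^1((0,R^n))$ because $\psi\lesssim s^{\frac2n-1}$. Hence $\phi$ is bounded on $[0,\tm)$. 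If $\tm=\infty$, these two facts contradict each other, so $\tm<\infty$.

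The main obstacle is technical rather than structural. One must justify that $\phi\in C^0([0,\tm))\cap C^1((0,\tm))$ and that the integrations by parts are legitimate despite the degenerate weight near $s=0$. I would handle this by integrating over $(\delta,R^n)$, using $z_s$ bounded and $z_{ss}=\frac1{n^2}s^{\frac1n-1}u_r$ for $t>0$ together with (\ref{1.5}), and letting $\delta\searrow0$. Continuity of $\phi$ at $t=0$ follows from $u\in C^0(\bom\times[0,\tm))$ and dominated convergence. Then one works with the integrated form of the differential inequality on $[t_0,t]$ and sends $t_0\searrow0$.
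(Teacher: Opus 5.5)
Your proof is correct. Up to the final step it is the paper's argument. It uses the same weight $s^{\frac{2}{n}-1}\Theta$ and the same two integrations by parts, which turn the diffusion term into $-\frac{\xi_\star^2}{R^2}\phi$. It makes the same observation that the nonlinear term equals $-\frac{n}{2}\int_0^{R^n} z^2\psi_s\,ds\ge 0$, and it uses the same a priori bound on $\phi$ coming from $0\le w\le\frac{m}{\omega_n}$.

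The difference is how you close the argument. The paper simply discards the quadratic term and keeps $\Phi'\ge(\mu-\frac{\xi_\star^2}{R^2})\Phi$. Then $\Phi(t)\ge\Phi(0)e^{(\mu-\xi_\star^2/R^2)t}$ contradicts boundedness if $T_{max}=\infty$, and this needs nothing beyond the signs of $\Theta$ and $\Theta_s$.

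You instead keep the quadratic term and turn it into a Riccati inequality $\phi'\ge\frac{n}{2c}\phi^2$ via Cauchy--Schwarz, with $c=\int_0^{R^n}\psi^2/|\psi_s|\,ds$. This costs an extra verification that $c<\infty$. Your check of that is fine: the asymptotics near $s=0$ for $n\ge 3$ and $n=2$, and $\Theta_s(R^n)<0$ near $s=R^n$.

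In return you get two things the paper's route does not give:
\begin{itemize}
\item an explicit upper bound $T_{max}\le\frac{2c}{n\phi(0)}$ on the existence time;
\item the same conclusion in the borderline case $\mu=\frac{\xi_\star^2}{R^2}$, that is $m=m_\star(n,R)$. Your closing step never uses the linear term, it only drops it as nonnegative. The paper's inequality degenerates there to $\Phi'\ge 0$ and gives no contradiction, and Theorem \ref{theo5} leaves that case open.
\end{itemize}
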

\proof
  Assuming on the contrary that $(u,v)$ be globally defined, from the regularity properties asserted by Lemma \ref{lem_loc}
  we readily obtain that then, with $\Theta$ taken from Lemma \ref{lem1},
  \bas
	\Phi(t):=\int_0^{R^n} s^{\frac{2}{n}-1} \Theta(s) z(s,t) ds,
	\qquad t\ge 0,
  \eas
  defines a function $\Phi \in C^0([0,\infty)) \cap C^1((0,\infty))$ which due to (\ref{0z}) satisfies
  \bea{4.3}
	\Phi'(t)
	&=& n^2 \int_0^{R^n} s\Theta(s) z_{ss}(s,t) ds
	+ n\int_0^{R^n} s^{\frac{2}{n}-1} \Theta(s) z(s,t)z_s(s,t) ds \nn\\
	& & + \mu \int_0^{R^n} s^{\frac{2}{n}-1} \Theta(s) z(s,t) ds
	\qquad \mbox{for all } t>0.
  \eea
  Here since $z(0,t)=z(R^n,t)=0$ for all $t>0$, and since $\Theta(R^n)=\lim_{s\searrow 0} s\Theta(s)=\lim_{s\searrow 0} s\Theta_s(s)=0$
  according to Lemma \ref{lem1}, two integrations by parts show that
  \bas
	& & \hs{-20mm}
	n^2 \int_0^{R^n} s\Theta(s) z_{ss}(s,t) ds \\
	&=& - n^2 \int_0^{R^n} \Big\{ s\Theta_s(s)+\Theta(s)\Big\} \cdot z_s(s,t) ds
	+ n^2 \cdot \Big[ s\Theta(s)z_s(s,t) \Big]_{s=0}^{s=R^n} \\
	&=& n^2 \int_0^{R^n} \Big\{ s\Theta_{ss}(s)+2\Theta_s(s)\Big\} \cdot z(s,t) ds
	-n^2\cdot \Big[ \Big\{ s\Theta_s(s)+\Theta(s)\Big\} z(s,t) \Big]_{s=0}^{s=R^n} \\
	&=& n^2 \int_0^{R^n} \Big\{ - \frac{\xis^2}{n^2R^2} s^{\frac{2}{n}-1} \Theta(s)\Big\} \cdot z(s,t) ds \\
	&=& - \frac{\xis^2}{R^2} \Phi(t)
	\qquad \mbox{for all } t>0
  \eas
  thanks to (\ref{1.2}).
  Moreover, using the identities $\Theta(R^n)=0$ and $\lim_{s\searrow 0} \big\{ s^{\frac{2}{n}-1} \Theta(s) z^2(s,t)\big\}=0$
  for all $t>0$, the latter being ensured by boundedness of $\Theta$ and of the fact that
  $z_s(\cdot,t)=\frac{1}{n} u\big( (\cdot)^\frac{1}{n},t\big)-\frac{\mu}{n}$ for each fixed $t>0$,
  we may rely on the inequalities $\Theta\ge 0$ and $\Theta_s\le 0$ asserted by Lemma \ref{lem1} to see upon another integration
  by parts that
  \bas
	n \int_0^{R^n} s^{\frac{2}{n}-1} \Theta(s) z(s,t) z_s(s,t) ds
	&=& \frac{n}{2} \int_0^{R^n} s^{\frac{2}{n}-1}\Theta(s) (z^2)_s(s,t) ds \\
	&=& \frac{n-2}{2} \int_0^{R^n} s^{\frac{2}{n}-2} \Theta(s) z^2(s,t) ds
	- \frac{n}{2} \int_0^{R^n} s^{\frac{2}{n}-1} \Theta_s(s) z^2(s,t) ds \nn\\
	& & + \frac{n}{2} \cdot \Big[ s^{\frac{2}{n}-1} \Theta(s) z^2(s,t)\Big]_{s=0}^{s=R^n} \\[2mm]
	&\ge& 0
	\qquad \mbox{for all } t>0,
  \eas
  because $n\ge 2$.
  Therefore, (\ref{4.3}) implies that
  \bas
	\Phi'(t)
	\ge \Big(\mu-\frac{\xis^2}{R^2}\Big) \Phi(t)
	\qquad \mbox{for all } t>0,
  \eas
  so that since (\ref{5.1}) guarantess that $\mu>\frac{\xis^2}{R^2}$, and since (\ref{4.2}) along with the continuity of the nontrivial
  nonnegative function $z(\cdot,0)=w_0-\frac{\mu (\cdot)}{n}$ warrants positivity of $\Phi(0)$, we conclude that
  \bas
	\Phi(t) \ge \Phi(0) e^{(\mu-\frac{\xis^2}{R^2})t} \to +\infty
	\qquad \mbox{as } t\to\infty.
  \eas
  However, the observation that $z(s,t) \le w(s,t) \le \frac{\mu R^n}{n}$ for all $(s,t)\in (0,R^n)\times (0,\infty)$, and that thus
  \bas	
	\Phi(t) \le \frac{\mu R^n}{n} \|\Theta\|_{L^\infty((0,R^n))} \int_0^{R^n} s^{\frac{2}{n}-1} ds
	= \frac{\mu R^{n+2}}{2} \|\Theta\|_{L^\infty((0,R^n))}
	\qquad \mbox{for all } t>0,
  \eas
  shows that this is impossible and that hence, in fact, $\tm$ must have been finite.
\qed
Our argument thereby becomes complete:\abs
\proofc of Theorem \ref{theo5} ii). \quad
  Observing that (\ref{5.2}) precisely asserts that the accordingly defined function $w_0$ from (\ref{w0}) satisfies (\ref{4.2}),
  we readily obtain the claim as a consequence of Lemma \ref{lem4} when combined with Lemma \ref{lem_loc}.
\qed

\bigskip

{\bf Acknowledgement.} \quad
The first author was supported by the
National Natural Science Foundation of China (No.~12371210) and Natural Science Foundation of Sichuan Province (No.~26QNJJA0020). The second author acknowledges support of the {\em Deutsche Forschungsgemeinschaft}
(No.~462888149).


\begin{thebibliography}{99}
%
\bibitem{amann_emb}
  \sc Amann, H.: \it Compact embeddings of vector-valued Sobolev and Besov spaces.
  \rm Glasnik Mat. {\bf 35} (55), 161-177 (2000)
\bibitem{biler1998}
  \sc Biler, P.: \it Local and global solvability of some parabolic systems modelling chemotaxis.
  \rm Adv.~Math.~Sci.~Appl. {\bf 8}, 715-743 (1998)
\bibitem{biler_cieslak_karch_zienkiewicz_DCDS2017}
  \sc Biler, P., Cie\'slak, T., Karch, G., Zienkiewicz, J.:
  \it Local criteria for blowup in two-dimensional chemotaxis models.
  \rm Discr.~Contin.~Dyn.~Syst. {\bf 37}, 1841-1856 (2017)
\bibitem{blanchet_dolbeault_perthame_EJDE2006}
  \sc Blanchet, A., Dolbeault, J., Perthame, B.:
  \it Two-dimensional Keller-Segel model: Optimal critical mass and qualitative properties of the solutions.
  \rm Electron.~J.~Differential Eq. {\bf 44}, 32 pp. (2006)
\bibitem{yuxiang}
  \sc Dai, C., Li, Y., Yan, J.:
  \it Critical mass phenomenon in a nonlinear Keller-Segel system with indirect signal production.
  \rm J.~Evol.~Equ. {\bf 26}, 61 (2026)
\bibitem{ding_win}
  \sc Ding, M., Winkler, M.:
  \it Radial blow-up in quasilinear Keller-Segel systems: approaching the full picture.  
  \rm Nonlinearity {\bf 37}, 125006 (2024)
\bibitem{djie_win}
  \sc Djie, K., Winkler, M.: \it Boundedness and finite-time collapse in a chemotaxis system with volume-filling effect.
  \rm Nonlinear Anal. {\bf 72}, 1044-1064 (2010)
\bibitem{flw}
  \sc Fuhrmann, J., Lankeit, J., Winkler, M.:
  \it A double critical mass phenomenon in a no-flux-Dirichlet Keller-Segel system.
  \rm J.~Math.~Pures Appl. {\bf 162}, 124-151 (2022)
\bibitem{giga_sohr}
  \sc Giga, Y., Sohr, H.: \it Abstract $L^p$ Estimates for the Cauchy Problem with Applications to the
  Navier-Stokes Equations in Exterior Domains.
  \rm J.~Funct.~Anal. {\bf 102}, 72-94 (1991)
\bibitem{herrero_velazquez}
  \sc Herrero, M.~A., Vel\'azquez, J.~J.~L.:
  \it A blow-up mechanism for a chemotaxis model.
  \rm Ann.~Scu.~Norm.~Sup.~Pisa Cl.~Sci. {\bf 24}, 633-683 (1997)
\bibitem{horstmann_wang}
  \sc Horstmann, D., Wang, G.: \it Blow-up in a chemotaxis model without symmetry assumptions.
  \rm Eur.~J.~Appl.~Math. {\bf 12}, 159-177 (2001)
\bibitem{JL}
  \sc J\"ager, W., Luckhaus, S.:
  \it On explosions of solutions to a system of partial differential equations modelling chemotaxis.
  \rm Trans.~Amer.~Math.~Soc. {\bf 329}, 819-824 (1992)
\bibitem{KS}
  \sc Keller, E.F., Segel, L.A.: \it  Initiation of slime mold aggregation viewed as an instability.
  \rm J.~Theor.~Biol. {\bf 26} 399-415 (1970)
\bibitem{kozono_sugiyama_JEE2008}
  \sc Kozono, H., Sugiyama, Y.:
  \it Local existence and finite time blow-up of solutions in the 2-D Keller-Segel system.
  \rm J.~Evol.~Equ. {\bf 8}, 353-378 (2008)
\bibitem{LSU}
  \sc Lady\v{z}enskaja, O. A., Solonnikov, V. A., Ural'ceva, N. N.:
  \it Linear and Quasi-Linear Equations of Parabolic Type.
  \rm American Mathematical Society, 1968
\bibitem{mao_yuxiang}
  \sc Mao, X., Li, Y.:
  \it Critical mass for Keller-Segel systems with supercritical nonlinear sensitivity.
  \rm Math. Models Methods Appl. Sci. {\bf 33}, 2395-2423 (2023)
\bibitem{mao_liu_yuxiang}
  \sc Mao, X., Liu, M., Li, Y.:
  \rm Critical mass for finite-time chemotactic collapse in the critical dimension via comparison. 
  \rm Nonlinearity {\bf 38}, 095026 (2025)
\bibitem{nagai1995}
  \sc Nagai, T.: \it Blow-up of radially symmetric solutions to a chemotaxis system.
  \rm Adv.~Math.~Sci.~Appl. {\bf 5}, 581-601 (1995)
\bibitem{nagai2001}
  \sc Nagai, T.: \it Blowup of Nonradial Solutions to Parabolic-Elliptic Systems Modeling
  Chemotaxis in Two-Dimensional Domains.
  \rm J.~Inequal.~Appl. {\bf 6}, 37-55 (2001)
\bibitem{NSY}
  \sc Nagai, T., Senba, T., Yoshida, K.:
  \it Application of the Trudinger-Moser inequality to a parabolic system of chemotaxis.
  \rm Funkc.~Ekvacioj, Ser.~Int. {\bf 40}, 411-433 (1997)
\bibitem{perthame_book}
  \sc Perthame, B.: \it Transport Equations in Biology.
  \rm Birkh\"auser, Basel, 2007
\bibitem{senba_suzuki}
  \sc Senba, T., Suzuki, T.: \it  Chemotactic collapse in a parabolic-elliptic system of mathematical biology.
  \rm Adv.~Differential Eq. {\bf 6}, 21-50 (2001)
\bibitem{senba_suzuki_JFA}
  \sc Senba, T., Suzuki, T.:
  \it Weak solutions to a parabolic-elliptic system of chemotaxis.
  \rm J.~Funct.~Anal. {\bf 191}, 17-51 (2002)
\bibitem{suzuki_book}
  \sc Suzuki, T.: \it Free Energy and Self-Interacting Particles.
  \rm Birkh\"auser, Boston, 2005
\bibitem{suzuki_JMPA}
  \sc Suzuki, T.: \it
  Exclusion of boundary blowup for 2D chemotaxis system provided with Dirichlet boundary condition for the Poisson part.
  \rm J. Math. Pures Appl. {\bf 100}, 347-367 (2013)
\bibitem{taowin_JEMS}
  \sc Tao, Y., Winkler, M.:
  \it Critical mass for infinite-time aggregation in a chemotaxis model with indirect signal production.
  \rm J.~Eur.~Math.~Soc. {\bf 19}, 3641-3678 (2017)
\bibitem{win_MATANN}
  \sc Winkler, M: \it How unstable is spatial homogeneity in Keller-Segel systems?
  A new critical mass phenomenon in two- and higher-dimensional parabolic-elliptic cases.
  \rm Math.~Ann. {\bf 373}, 1237-1282 (2019)
\bibitem{win_ct_growup_algebraic}
  \sc Winkler, M.: \it Complete infinite-time mass aggregation in a quasilinear Keller-Segel system.
  \rm Israel J.~Math. {\bf 263}, 93-127 (2024)
\bibitem{xiang_yang}
  \sc Xiang, Z., Yang, L.:
  \it Critical mass for the Cauchy problem of a chemotaxis model with indirect signal production mechanism.
 \rm J.~Evol.~Equ.~{\bf 25}, 26 (2025)
\bibitem{yang_lan}
  \sc Yang, L.: \it Critical mass for a no-flux-Dirichlet chemotaxis model with indirect signal production mechanism.
  \rm Adv. Nonlinear Anal. {\bf 15}, 20250147 (2026)
%
\end{thebibliography}
\end{document}